\providecommand{\keywords}[1]{\small \quad \quad \textbf{Keywords:} #1}
\tikzstyle{vertex}=[draw,circle,minimum size=18pt,inner sep=0pt]
 \newcommand{\tbS}{\texttt{S}\xspace}
\newcommand{\tbT}{\texttt{T}\xspace}
\newcommand{\tbG}{\texttt{G\,\%}\xspace}
\newcommand{\tbST}{\texttt{CT}\xspace}
\newcommand{\tbAUTO}{\texttt{AUTO-BD}\xspace}
\newcommand{\tbCPX}{\texttt{CPX}\xspace}
\newcommand{\tbNDO}{\texttt{ND}\xspace}
\newcommand{\tbDO}{\texttt{$\rm \Delta$O(\%)}\xspace}
\newcommand{\tbPEPF}{\texttt{\BnC}\xspace}
\newcommand{\tbCON}{\texttt{C}\xspace}
\newcommand{\tbET}{\texttt{ET}\xspace}
\newcommand{\tbBnBC}{\texttt{\BD}\xspace}
\newcommand{\tbN}{\texttt{N}\xspace}
\newcommand{\tbGC}{\texttt{GI\,\%}\xspace}
\newcommand{\tbBnBCC}{\texttt{NoCI}\xspace}
\newcommand{\tbBnBCCL}{\texttt{BD+LCI}\xspace}
\newcommand{\tbNCI}{\texttt{CI}\xspace}
\newcommand{\tbNLCI}{\texttt{LCI}\xspace}
\newcommand{\CPU}{\text{CPU}\xspace}
\newcommand{\BnBC}{{B\&BC}\xspace}
\newcommand{\BnC}{{B\&C}\xspace}
\newcommand{\LP}{{LP}\xspace}
\newcommand{\MIP}{{MIP}\xspace}
\newcommand{\MINLP}{{MINLP}\xspace}
\newcommand{\SCP}{{SCP}\xspace}
\newcommand{\CCSCP}{{PSCP}\xspace}
\newcommand{\CCSCPs}{{PSCPs}\xspace}
\newcommand{\BD}{{BD}\xspace}
\newcommand{\KKT}{{KKT}\xspace}
\newcommand{\SAA}{{SAA}\xspace}
\newcommand{\T}{\top}
\newcommand{\ie}{i.e.,\ }
\newcommand{\stt}{\,:\,}
\DeclareMathOperator{\conv}{conv}
\DeclareMathOperator{\proj}{Proj}
\newcommand{\M}{\mathcal{M}}
\newcommand{\X}{\mathcal{X}}
\newcommand{\Y}{\mathcal{Y}}
\newcommand{\setN}{\mathcal{S}}
\newcommand{\setS}{\mathcal{S}}
\newcommand{\setF}{\mathcal{F}}
\newcommand{\setC}{\mathcal{C}}
\newcommand{\CQ}{\mathcal{Q}}
\newcommand{\setL}{\mathcal{L}}
\newcommand{\SN}{\mathcal{N}}
\newcommand{\Prb}{\mathbb{P}}
\newcommand{\red}[1]{{\color{black}{#1}}}
\newcommand{\rev}[1]{{\color{black}{#1}}}
\newcommand{\cplex}{{CPLEX}\xspace}
\theoremstyle{plain}
\newtheorem{Theorem}{Theorem}[section]
\newtheorem{corollary}[Theorem]{Corollary}
\newtheorem{lemma}[Theorem]{Lemma}
\newtheorem{remark}[Theorem]{Remark}
\newtheorem{definition}[Theorem]{Definition}
\newtheorem{proposition}[Theorem]{Proposition}
 \crefname{theorem}{Theorem}{Theorems}
\crefname{example}{Example}{Examples}
\crefname{observation}{Observation}{Observations}
\crefname{remark}{Remark}{Remarks}
\crefname{proposition}{Proposition}{Propositions}
\crefname{property}{Property}{Propertys}
\crefname{lemma}{Lemma}{Lemmas}
\crefname{corollary}{Corollary}{Corollaries}
\crefname{algocf}{Algorithm}{Algorithms}	
\crefname{table}{Table}{Tables}	
\crefname{figure}{Figure}{Figures}
\crefname{algorithm}{Algorithm}{Algorithms}
\crefname{section}{Section}{Sections}
\title{Towards large-scale probabilistic set covering problems: an efficient Benders decomposition approach}
\author[a]{Wei Lv\orcidlink{0009-0009-6861-9532}}
\author[a]{Wei-Kun Chen\orcidlink{0000-0003-4147-1346}}
\author[a]{Yi-Long Chen}
\author[b,c]{Yu-Hong Dai\orcidlink{0000-0002-6932-9512}}
\affil[a]{\small School of Mathematics and Statistics, Beijing Institute of Technology, Beijing 100081, China\\\textit{\{lvwei,chenweikun,chenyilong\}@bit.edu.cn}}
\affil[b]{\small State Key Laboratory of Mathematical Sciences, Academy of Mathematics and Systems Science, Chinese Academy of Sciences, Beijing 100190, China}
\affil[c]{\small School of Mathematical Sciences, University of Chinese Academy of Sciences, Beijing 100049, China\\\textit{dyh@lsec.cc.ac.cn}}
\date{\small \today}
\begin{document}
	
\maketitle

\begin{abstract}
In this paper, we investigate the probabilistic set covering problem (\CCSCP) in which the right-hand side is a binary random vector and the covering constraint is required to be satisfied with a prespecified probability.
We consider the case with a finite discrete distribution of the random vector,  which usually arises in the context of the {sample average approximation approach}. 
We develop an effective Benders decomposition (\BD) algorithm for solving large-scale \CCSCPs, which enjoys two key advantages: (i) the number of variables in the underlying Benders reformulation is independent of the scenario size; 
and (ii) the Benders cuts can be separated by an efficient combinatorial algorithm.
For the special case that the random vector is a combination of several independent random blocks/subvectors, we explicitly take this kind of block structure into consideration and develop a more efficient \BD algorithm.  
Moreover, to further speed up the two proposed \BD algorithms, we develop a class of strong valid inequalities, which are guaranteed to be facet-defining for the polytope induced by the probabilistic constraint.
Numerical results on instances with up to one million scenarios demonstrate the effectiveness of the proposed \BD algorithms over a black-box mixed integer programming solver's branch-and-cut and automatic \BD algorithms and a state-of-the-art algorithm in the literature.
	\vspace{8pt} \\
	\keywords{Benders decomposition $\cdot$ Large-scale optimization $\cdot$ Probabilistic set covering $\cdot$ Sample average approximations}
\end{abstract}

\section{Introduction}\label{sec:intro}

In this paper, we consider the probabilistic set covering problem (\CCSCP) of the form:
\vspace*{0.2cm}
\begin{align}
	\label{ccscp}\tag{PSCP}
		\min \left\{c^\T x \,:\, \Prb \{Ax \geq \xi\} \geq 1 - \epsilon,~x \in \{0,1\}^{n} \right\},
\end{align}
where $A$ is a 0-1 matrix of dimension $m \times n$, 
$c \in \mathbb{R}^{n}_+$ is the cost of columns, 
$\epsilon$  is a confidence parameter chosen by the decision maker, typically near zero, 
and $\xi$ is a random vector taking values in $\{0,1\}^m$.
\eqref{ccscp} generalizes the well-known set covering problem (\SCP) \citep{Toregas1971} where the random vector $\xi$ is replaced by the all ones vector and has obvious applications in location problems \citep{Beraldi2002,Norkin2018}. 
It also serves as a building block in various applications; see  \cite{Saxena2010,Wu2016,Azizi2022,Feng2024}.

\eqref{ccscp} falls in the class of probabilistic programs (also known as chance-constrained programs), which has been extensively investigated in the {literature}.
We refer to the surveys \cite{Ahmed2008,Kucukyavuz2017,Kucukyavuz2022,Prekopa2003} for a detailed discussion of probabilistic programs and the various algorithms.
\cite{Beraldi2002} first investigated \eqref{ccscp} and developed a specialized branch-and-bound algorithm, which involves enumerating the so-called $(1-\epsilon)$-efficient points introduced by \cite{Prekopa1990} and solving a deterministic \SCP for each $(1-\epsilon)$-efficient point.
\cite{Saxena2010} studied the case with the assumption that the random vector $\xi$ can be decomposed into $T$ blocks/subvectors $\xi^1,\ldots,\xi^T$ formed by subsets $\M^t \subseteq [m]$, and $\xi^{t_1}$ and $\xi^{t_2}$ are independent for any two distinct $t_1, t_2 \in [T]$ (throughout the paper, for a nonnegative integer $\tau$, we denote $[\tau] = \{1,\ldots,\tau\}$ where $[\tau] = \varnothing$ if $\tau = 0$).
{Note that this assumption is not restrictive as it encompasses the cases of all independent and dependent random variables \citep{Beraldi2002}.}
Problem  \eqref{ccscp} in this special case can be rewritten as
\begin{align}
	\label{biccscp}\tag{PSCP'}
	\begin{aligned}
		\min \left\{c^\T x \, : \, \prod_{t=1}^T \Prb \{A^t x \geq \xi^t\} \geq 1 - \epsilon, ~ x \in \{0,1\}^{n} \right\},
	\end{aligned}
\end{align} 
where $A^t$ is the submatrix of $A$ formed by the rows in $\M^t$, $t \in [T]$.
\cite{Saxena2010} developed an equivalent mixed integer programming (\MIP) formulation for \eqref{biccscp} based on the $(1-\epsilon)$-efficient and -inefficient points of the distribution of $\xi^t$ and solved it by a customized branch-and-cut (\BnC) algorithm.
For set covering models with uncertainty on the constraint matrix $A$, we refer to \cite{Ahmed2013,Beraldi2010,Fischetti2012,Hwang2004,Jiang2025,Lutter2017,Shen2023,Song2013,Wu2019} and the references therein.

One weakness of the above two approaches for solving \eqref{ccscp} is that \rev{they require} an enumeration of the $(1-\epsilon)$-efficient (and -inefficient) points of the distribution of $\xi$ or $\xi^t$, which could be computationally demanding, especially when the dimensions are large.
In addition, the \MIP formulation of  \cite{Saxena2010} may grow exponentially with the dimensions of $\xi^t$, making it only capable of solving \eqref{ccscp} with small dimensions of $\xi^t$.
\cite{Luedtke2008} addressed this difficulty by using the sample average approximation (\SAA) approach \citep{Luedtke2008,Pagnoncelli2009}, where \eqref{ccscp} is replaced by an approximation problem based on Monte Carlo samples of the random vector $\xi$.
The approximation problem is also a form of \eqref{ccscp} but has a finite discrete distribution of $\xi$.
As such, it can be reformulated as a deterministic \MIP problem and solved by the state-of-the-art \MIP solvers.
In general, the selection of the scenario size is crucial for the approximation quality;
the larger the scenario size, the smaller the approximation error; see \cite{Ahmed2008,Luedtke2008,Nemirovski2006a} for related discussions. 
However, since the problem size of the \MIP formulation grows linearly with the number of scenarios, this also poses a new challenge for \MIP solvers to solve \eqref{ccscp} with a huge number of scenarios.

\subsection{Contributions and outlines}

The main motivation of this paper is to investigate decomposition based algorithms to solve large-scale \CCSCPs.
In particular, 
\begin{itemize}
	\item [(i)] For \CCSCPs without the consideration of a block structure of $\xi$, i.e., problem \eqref{ccscp}, we investigate the \MIP formulation of \cite{Luedtke2008} and propose a customized  single-cut Benders decomposition (\BD) algorithm where the scenario variables are projected out from the problem formulation and the Benders cuts are separated on the fly.
	Two key advantages of the proposed \BD algorithm, which make it capable of solving \eqref{ccscp} with a huge number of scenarios, are: (i) the number of variables in the underlying Benders reformulation is equal to $m+n$ but is independent of the number of scenarios of the random data; and (ii) the Benders cuts can be separated by an {efficient combinatorial algorithm.}
	\item [(ii)] For \CCSCPs with a block structure of $\xi$ \citep{Saxena2010}, i.e., problem \eqref{biccscp}, we develop a mixed integer nonlinear programming (\MINLP) formulation, and propose a multi-cut customized \BD algorithm for it. 
	The proposed multi-cut \BD algorithm explicitly takes the block structure of the random vector $\xi$ into account, and adds up to $T$ Benders cuts in a single iteration, which renders the \BD algorithm able to converge more quickly.
	With this feature, we show that although both \MIP and \MINLP formulations can be seen as approximations of the \CCSCP in the context of the \SAA approach, 
	the multi-cut \BD algorithm for the \MINLP formulation is even more efficient than the single-cut \BD algorithm for the \MIP formulation. 
	\item [(iii)] To further speed up the two proposed \BD algorithms, we leverage the sequential lifting technique \citep{Wolsey1976,Richard2011} and develop a class of strong valid inequalities---lifted cover inequalities, which, unlike the Benders cuts, are guaranteed to be facet-defining for the polytope induced by the probabilistic constraint.
	Moreover, to alleviate the potential computational burden spent in the lifting procedure, we investigate a subclass of them---the maximal clique inequalities, which enjoy light-weight algorithms for their computation and separation.
	Equipped with the maximal clique inequalities, the two proposed \BD algorithms converge much faster, especially for instances with a small confidence parameter $\epsilon$.
\end{itemize}

By extensive computational experiments on a testset of \rev{$7200$} \CCSCP instances with up to $10^6$ scenarios, we demonstrate
the effectiveness of our proposed \BD algorithms for solving large-scale \CCSCPs over existing state-of-the-art approaches.
In particular, for \CCSCPs without the consideration of a block structure of the random vector $\xi$, 
the proposed \BD algorithm is much more efficient than a state-of-the-art \MIP solver's \BnC and automatic \BD algorithms \rev{(when the number of scenarios is large)};
for \CCSCPs with a block structure of $\xi$, the proposed \BD algorithm is much more robust than the state-of-the-art approach in \cite{Saxena2010} in terms of solving \eqref{biccscp} with different input parameters.

It deserves to mention that although the \BD algorithms are proposed to tackle \CCSCPs, 
they can also be applied to solve other variants of \CCSCPs that involve the probabilistic covering constraint $ \Prb \{Ax \geq \xi\} \geq 1 - \epsilon$ where $\xi$ is a binary random vector.
In Online Appendix A, we consider three variants of the \CCSCP investigated in \cite{Saxena2010} (i.e., the probabilistic versions of the single-source capacitated facility location problem, capacitated warehouse location problem, and capacitated $p$-median problem), and demonstrate the efficiency of the proposed \BD algorithms over existing state-of-the-art approaches.

\rev{\cite{Luedtke2014} and \cite{Liu2016} developed decomposition algorithms along with the mixing inequalities \citep{Atamturk2000,Gunluk2001b,Luedtke2010} for a more general setting---the two-stage chance-constrained programs---in which the left-hand side of the constraints may also involve uncertainty. 
	The decomposition algorithms and mixing inequalities, however, are fundamentally different from the ones proposed in this paper. 
	First, the decomposition algorithms of \cite{Luedtke2014} and \cite{Liu2016} attempt to project the second stage variables out from the problem formulation but still involve scenario variables.
	In contrast, the proposed \BD algorithms exploit the structural properties of the \CCSCP and project the scenario variables out from the problem formulations, 
	making them capable of solving \CCSCPs with a huge number of scenarios.
	Second, {the mixing inequalities} of \cite{Luedtke2014} and \cite{Liu2016} are generated from the so-called mixing set and involve the scenario variables.
	However, the proposed lifted cover inequalities are derived from the polytope induced by the probabilistic constraint and do not involve the scenario variables, rendering  them able to be embedded in the proposed \BD algorithms.}
	
An extended abstract of this paper can be found in 2024 INFORMS Optimization Society (IOS) conference \citep{Chen2024}.
Compared to its conference version, this paper investigates the polyhedral structures of the $0$-$1$ sets arising in \CCSCPs with and without a block structure of $\xi$, and develops strong valid inequalities to improve the computational performance of the \BD algorithms for solving \CCSCP{s}.
Moreover, extensive computational results are also reported in this paper.

The rest of the paper is organized as follows. 
\cref{sec:formulation} presents the mathematical formulations for \eqref{ccscp} and \eqref{biccscp}, and provides a favorable property for the formulations. 
\cref{sec:bd} develops the \BD algorithms for solving the two formulations. 
\cref{sec:lci} derives the strong valid inequalities to improve the performance of the \BD algorithms.
\cref{section:num} presents the computational results.
Finally, \cref{section:conclusion} draws the conclusion.

\section{Mathematical formulations}\label{sec:formulation}

Consider \eqref{ccscp} with an (arbitrary) finite discrete distribution of the random vector $\xi$, that is, 
there exist finitely many scenarios $\xi_1, \ldots, \xi_s \in \{0,1\}^m$ such that $\mathbb{P}\{\xi = \xi_i\} = p_i$ for $i \in[s]$ and $\sum_{i = 1}^{s} p_i = 1$.
We introduce for each $k\in [m]$, a binary variable $v_k$, where $v_k=1$ guarantees $A_k x \geq 1$,  and for each $i \in [s]$, a binary variable $z_i$, where $z_i = 1$ guarantees $ v \geq \xi_i$.
Then \eqref{ccscp} can be formulated as the following \MIP formulation \citep{Luedtke2008}: 
\begin{align}
	\min \left\{c^\T x\, : \, \eqref{cons:xandv}\text{--}\eqref{cons:vandzbound}\right\}\label{mip}\tag{MIP},
\end{align}
where
	\begin{align}
		& A x \geq v, \label{cons:xandv}\\
		 & v_k \geq z_i,~\forall~k \in [m],~i \in \setN_k, \label{cons:vandz}\\
		 & \sum_{i = 1}^{s}p_i z_i \geq 1-\epsilon, \label{cons:knap}\\
		 & x \in \{0, 1\}^n,~ v \in \{0, 1\}^m,~z \in \{0, 1\}^s \label{cons:vandzbound}.
	\end{align}
Here, $\setN_k = \left\{i \in [s] \stt \xi_{ik} = 1\right\}$, $k \in [m]$.
Constraints \eqref{cons:vandz}--\eqref{cons:vandzbound} ensure $\mathbb{P} \{v \geq \xi\} \geq 1 - \epsilon$.
Note that if $\xi_i= \boldsymbol{0}$ for some $i \in [s]$, we can set  $z_i = 1$ and remove variable $z_i$ from the formulation. 
Therefore, we can assume $\xi_i \neq \boldsymbol{0}$ for all $i \in [s]$ and thus $\bigcup_{k \in [m]} \setN_k = [s]$ in the following.

For problem \eqref{biccscp} (i.e., problem \eqref{ccscp} with a block structure of $\xi$) with a finite discrete distribution of $\xi^t$: $\mathbb{P}\{\xi^t = \xi^t_i\} = p_{it}$ for $i \in[s]$ and $\sum_{i = 1}^{s} p_{it} = 1$ (for simplicity of exposition, here we assume that the numbers of scenarios of $\xi^t$ are identical for all $t \in  [T]$; the extension to the case that the numbers of scenarios of $\xi^t$ are different for different $t$ is straightforward), 
a similar convex \MINLP problem can be presented as follows:
\begin{align}
	\min \left\{c^\T x\, : \, \eqref{cons:xandv},~ \eqref{cons:vandz2}\text{--}\eqref{cons:vandzbound2}\right\}\label{minlp}\tag{MINLP},
\end{align}
where
	\begin{align}
		 & v_k \geq z_{it},~\forall~t \in [T],~k \in \M^t,~i \in \setN^t_k,\label{cons:vandz2}\\
		 & \eta_t \leq \ln\left(\sum_{i=1}^s p_{it} z_{it}\right),~\forall~t \in [T], ~\sum_{t=1}^T \eta_t \geq \ln(1-\epsilon),\label{cons:eta}\\
		 & x \in \{0, 1\}^n,~v \in \{0, 1\}^m,~z \in \{0,1\}^{s\times T},~\eta\in \mathbb{R}^T.\label{cons:vandzbound2}
	\end{align}
Here,  $\setN^t_k = \left\{i \in [s] \stt \xi^t_{ik} = 1\right\}$ (similarly, we assume $\bigcup_{k \in \M^t} \setN^t_k = [s]$); 
variable $\eta_t$ represents the value of $\ln(\Prb\{v^t \geq \xi^t\})$, where $v^t$ is a subvector of $v$ formed by the rows in $\M^t$; and constraints \eqref{cons:vandz2}--\eqref{cons:vandzbound2} ensure that $ \prod_{t=1}^T \Prb \{v^t \geq \xi^t\} \geq 1 - \epsilon$.
\begin{remark}\label{independentRem}
	By $ \prod_{t=1}^T \Prb \{v^t \geq \xi^t\} \geq 1 - \epsilon$, $\Prb \{v^t \geq \xi^t\} \geq 1 - \epsilon$ must hold for all $t \in [T]$.
	As a result, for any feasible solution {$(x,v,z,\eta)$} of problem \eqref{minlp}, it follows
	\begin{equation}\label{independentEq}
		\sum_{i=1}^s p_{it} z_{it} \geq 1-\epsilon, ~\forall~t \in [T]. 
	\end{equation}
\end{remark}

Note that if the random vector $\xi$ does not have a finite discrete distribution, both problems \eqref{mip} and \eqref{minlp} can be seen as approximations of problem \eqref{ccscp} in the context of the \SAA approach, where $\{\xi_i\}_{i \in [s]}$ and $\{\xi_i^t\}_{(i,t) \in[s]\times[T]}$ are independent Monte Carlo samples of the random vector $\xi$.  
\rev{When the number of blocks $T$} is equal to one, problems \eqref{mip} and \eqref{minlp} with the same samples (i.e., $\xi_i= \xi_i^1$ for all $i \in [s]$) are equivalent.
However, \rev{when the number of blocks $T$} is larger than one, the two problems with the same random samples (i.e., $\xi_i= \left[{(\xi_i^1)}^\top, \ldots, {(\xi_i^T)}^\top\right]^\top$ for all $i \in [s]$) are not equivalent in general as the random vector  $\xi$ with a finite discrete distribution, obtained from Monte Carlo sampling, may not have an independent block structure; see Online Appendix B for an illustrative example.
In \cref{sec:comparetwoproblem}, we will perform computational experiments to compare the performance of the two approximations.

Although problems \eqref{mip} and \eqref{minlp} can be solved to optimality by the state-of-the-art \MIP/\MINLP solvers, the potentially huge numbers of scenario variables $z$ and related constraints make this approach only able to solve moderate-sized instances.
Indeed, for problem  \eqref{mip}, the numbers of variables and constraints are $m+n+s$ and $m+1+\sum_{k =1}^m |\setS_k|$, respectively; for problem \eqref{minlp}, the numbers of variables and constraints are $m+n+(s+1)T$ and $m+T+1+\sum_{t=1}^T \sum_{k \in \M^t} |\setS_k^t|$, respectively.
To alleviate the computational difficulty, \cite{Lejeune2008,Lejeune2010,Luedtke2008} proposed a preprocessing technique to reduce the numbers of variables and constraints in problem \eqref{mip}.
In particular, for problem \eqref{mip}, if $p(\setN_k):= \sum_{i \in \setN_k} p_i > \epsilon$, then it follows from \eqref{cons:vandz} and \eqref{cons:knap} that $v_k=1$ must hold and the related constraints $v_k \geq z_i$ for $i \in \setS_k$ can be removed from \eqref{cons:vandz}.
The preprocessing technique can also be extended to problem \eqref{minlp}.
Unfortunately, the numbers of variables and constraints after preprocessing can still grow linearly with the number of scenarios, making it  challenging to solve problems with a huge number of scenarios.
For problem \eqref{minlp}, \cite{Saxena2010} proposed an equivalent \MIP reformulation in the space of $x$ and $v$. 
However, the problem size of this \MIP reformulation depends on the number of the so-called $(1-\epsilon)$-efficient (and -inefficient) points, which generally grows exponentially with the block sizes of $\{\xi^t\}$. 

To overcome the above weakness, we will develop an efficient \BD approach in \cref{sec:bd},
which is based on a favorable property of the two formulations, detailed as follows.
Let (\MIP') (respectively, (\MINLP')) be the relaxation of problem \eqref{mip} (respectively, \eqref{minlp}) obtained by removing the integrality constraints $z \in \{0,1\}^s$ (respectively, $z \in \{0,1\}^{s\times T}$) and replacing the integrality constraints $v \in \{0,1\}^m$ by $v \leq \boldsymbol{1}$.
The following proposition shows that this operation does not change the \rev{optimal} values of problems \eqref{mip} and \eqref{minlp}.
\begin{proposition} \label{property:relax}
	Problems \eqref{mip} and \eqref{minlp} are equivalent to {\rm (\MIP'\rm)} and {\rm (\MINLP')}, respectively, in terms of sharing the same optimal values.
\end{proposition}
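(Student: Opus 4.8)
The plan is to exploit the fact that the objective $c^\T x$ depends on neither $v$ nor $z$, so that relaxing the integrality of $z$ can change the optimal value only if it enlarges the set of attainable $x$. Since (\MIP') and (\MINLP') are genuine relaxations (they enlarge the feasible region of a minimization problem), their optimal values are trivially no larger than those of \eqref{mip} and \eqref{minlp}. Hence it suffices to prove the reverse inequality: starting from an arbitrary feasible point of the relaxed problem, I will construct a feasible point of the integer problem sharing the same $x$-component, and therefore the same objective value. Because the construction fixes $x$, the statement really reduces to a claim about the $z$-subproblem for a fixed binary pair $(x,v)$.

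The core step is a coordinatewise ``round-up'' of $z$. Fix a feasible $(x,v,z)$ of (\MIP') and recall that $v$ remains binary in this relaxation. Constraint \eqref{cons:vandz} reads $z_i \le v_k$ for every $k$ with $i \in \setN_k$, i.e.\ $z_i \le \bar z_i := \min\{\, v_k : i \in \setN_k \,\}$. Under the standing assumption $\cup_{k\in[m]}\setN_k=[s]$ this minimum ranges over a nonempty index set, so $\bar z_i$ is well defined, and since $v$ is binary, $\bar z_i\in\{0,1\}$. Thus $\bar z$ is an \emph{integral} vector that still satisfies \eqref{cons:vandz} and dominates $z$ componentwise. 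Because the knapsack constraint \eqref{cons:knap} has nonnegative coefficients $p_i$, replacing $z$ by $\bar z$ only increases its left-hand side, so $\sum_i p_i \bar z_i \ge \sum_i p_i z_i \ge 1-\epsilon$. Hence $(x,v,\bar z)$ is feasible for \eqref{mip} with the same objective, settling the \eqref{mip} case.

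For \eqref{minlp} I would apply the same round-up blockwise. Given a feasible $(x,v,z,\eta)$ of (\MINLP'), set $\bar z_{it}:=\min\{\,v_k : k\in\M^t,\ i\in\setN^t_k\,\}\in\{0,1\}$, which again dominates $z_{it}$ and preserves \eqref{cons:vandz2}. Monotonicity of $\ln$ together with nonnegativity of $p_{it}$ yields $\ln\bigl(\sum_i p_{it}\bar z_{it}\bigr)\ge\ln\bigl(\sum_i p_{it} z_{it}\bigr)\ge\eta_t$ for each $t$, where the argument of the logarithm stays strictly positive since $\sum_i p_{it}\bar z_{it}\ge\sum_i p_{it} z_{it}>0$, the latter being forced by feasibility of $\eta_t\in\R$ in \eqref{cons:eta}. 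Thus $(x,v,\bar z,\eta)$ still obeys $\sum_t\eta_t\ge\ln(1-\epsilon)$ and is feasible for \eqref{minlp} with an integral $z$, completing the argument.

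The only delicate point — and the one place I would check carefully — is the well-definedness and integrality of the upper bounds $\bar z$ and $\bar z_{it}$: this is exactly where the binary nature of $v$ and the covering assumptions $\cup_{k}\setN_k=[s]$ (resp.\ $\cup_{k\in\M^t}\setN^t_k=[s]$) are used, and where the positivity needed for the domain of $\ln$ must be verified. Everything else is a routine monotonicity observation, so I anticipate no genuine obstacle beyond this bookkeeping.
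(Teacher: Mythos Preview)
Your proposal is correct and follows essentially the same route as the paper: both proofs round each $z_i$ (resp.\ $z_{it}$) up to its tightest binary upper bound $\bar z_i=\min\{v_k:i\in\setN_k\}$ (resp.\ $\bar z_{it}=\min\{v_k:k\in\M^t,\ i\in\setN^t_k\}$), observe that this preserves all constraints by monotonicity, and conclude equality of optimal values. Your version is slightly more explicit about the well-definedness of the minimum and the positivity needed for the logarithm, but the argument is identical.
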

\begin{proof}{Proof}
	Let $o_{\rm \MIP}$ and $o_{\rm \MIP\text{'}}$ be the optimal values of problems \eqref{mip} and (\MIP').
	Clearly, $o_{\rm \MIP\text{'}} \leq o_{\rm \MIP}$.
	To prove the other direction, suppose that $(x,v,z)$ is an optimal solution of (\MIP').
	Let \rev{$\bar{v}$ and $\bar{z}$} be defined by $\bar{v} = \min\{Ax, \boldsymbol{1}\} \in \{0,1\}^m$ and $\bar{z}_{i} =  \min_{k \in \M_i} \bar{v}_k\in \{0,1\}$, where $\M_i = \{ k \in [m]\,:\, i \in \setN_k  \}$, $i \in [s]$.
	By \eqref{cons:xandv}, it follows $v \leq \min\{Ax, \boldsymbol{1}\}= \bar{v}$, and 
	by \eqref{cons:vandz}, it follows $z_i \leq \min_{k \in \M_i} v_k \leq  \min_{k \in \M_i} \bar{v}_k = \bar{z}_i$.
	Thus $(x, \bar{v}, \bar{z})$ must be a feasible solution of problem \eqref{mip}. 
	As a result, $o_{\rm \MIP} = c^\top x  \leq o_{\rm \MIP\text{'}}$.
	This shows that $o_{\rm \MIP}=o_{\rm \MIP\text{'}}$ and the equivalence of problems \eqref{mip} and (\MIP').  
	The proof for the equivalence of problems \eqref{minlp} and (\MINLP') is analogous. \qedhere
\end{proof}
 
Although the integrality constraints $v \in \{0,1\}^m$ can be relaxed into $v \leq \boldsymbol{1}$, \cite{Luedtke2008} noted that leaving $v \in \{0,1\}^m$ in problem \eqref{mip} enables \BnC solvers to favor branching on variables $v$ and effectively improves the overall performance.
Our preliminary experiments also justified this statement (for both (\MIP') and (\MINLP')).
Due to this, we decide to leave the integrality constraints $v\in \{0,1\}^m$ in the two formulations.

	\section{Benders decomposition}\label{sec:bd}

In this section, we first review the (generalized) \BD approach for generic convex \MINLP{s} and then apply the \BD approach to  formulations \eqref{mip} and \eqref{minlp} of \CCSCPs.

\subsection{Benders decomposition for convex \MINLP{s}}

Here we briefly review the (generalized) \BD algorithm for convex \MINLP{s}. 
For more details, we refer to \cite{Geoffrion1972,Bonami2012,Belotti2013}.
We  consider a convex \MINLP of the form
\begin{equation}
	\label{g_minlp}
	\min \left\{ \rev{c^\top x} \, : \, g_\ell(x,y) \leq 0,~\forall~ \ell \in [L],~Ax \geq b, ~x \in \mathbb{Z}^{p} \times \mathbb{R}^{n-p},~y \in \mathbb{R}^\tau \right\}, 
\end{equation}
\rev{where} $g_1(x,y), \ldots, g_L(x,y)$ are convex and twice differentiable functions.
\rev{The objective function in \eqref{g_minlp}} is a linear function of variables $x$\rev{, as} this will be the case in the context of solving formulations \eqref{mip} and \eqref{minlp}.
\rev{Notice that, however, convex \MINLP problems with a general convex and twice differentiable objective function $f(x,y)$ of variables $x$ and $y$ can also be tackled by the BD algorithm \citep{Geoffrion1972,Bonami2012,Belotti2013}.}

Let $\X = \{x \in \mathbb{Z}^p \times \mathbb{R}^{n-p} \,:\, Ax \geq b\}$ and $\Y(x) = \{ y \in \mathbb{R}^\tau \,:\, g_\ell (x,y) \leq 0,\,\forall\,\ell \in [L] \}$.
Problem \eqref{g_minlp} can be formulated as the \emph{master problem} in the $x$ space:
\begin{equation}
	\label{mp}
	\min \left\{ c^\top x  \, : \,  \Psi(x) \leq 0, ~x\in \X \right\}, 
\end{equation}
where
\begin{equation}\label{FP}
		\Psi(x) = \min \left\{ \sum_{\ell \in \mathcal{L}^{\bot}} w_\ell r_\ell \stt  g_\ell (x,y) \leq 0,~\forall~\ell \in  \mathcal{L},~g_\ell (x,y) \leq r_\ell,~r_\ell \geq 0,~\forall~\ell \in \mathcal{L}^\bot\right\}
\end{equation}
is a convex function measuring the infeasibility of $g_\ell(x,y)$, $\ell \in [L]$, at point $x$ (if $\Y(x) =\varnothing$). 
Here $w_\ell >0$, $\ell \in \mathcal{L}^\bot$, are the weights that can be chosen to reduce to, e.g., ${||\cdot||}_1$ norm minimization.
For simplicity, we assume that {$\mathcal{L}\subseteq[L]$ is the set of constraints} that can be satisfied at all $x\in \X$, and its complement {$\mathcal{L}^{\bot}$ is the set of constraints} that can possibly be violated at some $x \in \X$.

The master problem \eqref{mp} can be solved as an \MIP problem by a linear programming (\LP) based \BnC approach in which linear outer approximations of  the nonlinear constraint $ \Psi(x) \leq 0$, called (generalized)  \emph{Benders feasibility cuts}, are separated on the fly.
In particular, let $x^*$ be a solution of the \LP relaxation of the current master problem. 
Due to the convexity, $\Psi (x)$ can be underestimated by a supporting hyperplane $\Psi(x^*) + \mu(x^*)^\top (x-x^*)$, so we obtain the following Benders feasibility cut 
\begin{equation}\label{BDF}
	0  \geq \Psi(x^*) + \mu(x^*)^\top (x-x^*),
\end{equation}
where 
\begin{equation}
	\mu_i(x^*) = \sum_{\ell=1}^L u_\ell^* \frac{\partial g_\ell(x^*, y^*)}{\partial x_i},~\forall~i \in [n],
\end{equation}
$y^*$ and $u^*$ are optimal primal and Lagrangian dual solutions of the convex problem \eqref{FP} with $x=x^*$, derived using the Karush-Kuhn-Tucker (\KKT) conditions; see \cite{Geoffrion1972,Bonami2012,Belotti2013} for more details.

Two remarks on the above \BD algorithm are in order.
First, if problem \eqref{FP} can be decomposed into several subproblems, then we can construct a Benders feasibility cut for each subproblem.
In this case, to speed up the convergence of the algorithm, we can implement a multi-cut version of the \BD algorithm \citep{Rahmaniani2017}, 
where in each iteration, multiple Benders feasibility cuts will be (possibly) added,  each of which corresponds to a subproblem.
Second,  if $\mathcal{L}^\bot=\{\ell'\}$ is a singleton (which will be considered in the context of solving formulations \eqref{mip} and \eqref{minlp}), we can set $w_{\ell'} =1$ and solve an equivalent problem:
\begin{equation}\label{FSB}
	\Psi(x) = \min \left\{ g_{\ell'}(x,y) \, : \, ~g_\ell (x,y) \leq 0 ,~\forall~\ell \in  [L]\backslash\{\ell'\}\right\}.
\end{equation}
The Benders feasibility cut \eqref{BDF} can then be adapted as  
\begin{equation}\label{BDF2}
	0  \geq {\Psi}(x^*) + {\mu}(x^*)^\top (x-x^*),
\end{equation}
where 
\begin{equation}
	\mu_i(x^*) = \frac{\partial g_{\ell'}(x^*, y^*)}{\partial x_i}+ \sum_{\ell \in [L]\backslash \{\ell'\}} u_\ell^* \frac{\partial g_\ell(x^*, y^*)}{\partial x_i},~\forall~i \in [n],
\end{equation}
$y^*$ and $u^*$ are optimal primal and Lagrangian dual solutions of the convex problem \eqref{FSB} with $x=x^*$.

\subsection{Benders decomposition for generic \CCSCPs}
\label{subsec:BD1}

Now, we apply the \BD approach to formulation \eqref{mip} of the \CCSCP with a finite discrete distribution of $\xi$. From \cref{property:relax}, we project the $z$ variables out from the formulation and obtain the master problem
\begin{equation}\label{mp1}
	\min \left\{c^\top x\, : \, Ax \geq v,~\Psi(v) \leq 0,~(x,v) \in \{0,1\}^{n+m}\right\}.
\end{equation}
For a solution $v^* \in [0,1]^m$ (encountered in the \BnC search tree of the \BD algorithm), there always exists a feasible solution $z^*$ satisfying \eqref{cons:vandz}, and thus $\Psi(v^*)$ can be presented as follows:
\begin{equation}
	\label{LSP}
	\Psi(v^*) = \min \left\{(1-\epsilon)-\sum_{i=1}^s p_i z_i \,:\, z_i \leq v^*_k,~\forall~k \in [m],~i \in \setN_k \right\}.
\end{equation}
Note that as shown in \cref{property:relax}, it is also possible to project the $v$ variables out from the formulation. 
However, based on our computational experience, leaving the $v$ variables (whose number $m$ is generally not large) in the master problem makes the \BD algorithm converge much faster, and thus improves the overall performance.

Let $u_{ki}$ be the dual variable associated with constraint $z_i\leq v_k^*$ in problem \eqref{LSP}, and $\{u^*_{ki}\}$ be an optimal dual solution of problem \eqref{LSP}. 
Hence, 
the Benders feasibility cut \eqref{BDF2} reduces to 
\begin{equation}
	\label{linearbfc}
	0  \geq \Psi(v^*) -\sum_{k = 1}^m \sum_{i \in \setN_k} u^*_{ki} (v_k-v_k^*).
\end{equation}

Next, we derive a closed formula for the Benders feasibility cut \eqref{linearbfc} using primal and dual optimal solutions of problem \eqref{LSP}.
The Lagrangian function corresponding to problem \eqref{LSP} reads
\begin{equation}
	L(z, u) =  (1-\epsilon)-\sum_{i=1}^s p_i z_i + \sum_{k = 1}^m \sum_{i \in \setN_k} u_{ki}(z_i - v^*_k).
\end{equation}
Without loss of generality, we assume that values $\{v_k^*\}_{k \in [m]}$ are sorted nondecreasingly, i.e., $v^*_1\leq \cdots \leq v_m^*$.
Let $\M_{i}= \{ k \in [m]\, : \, i \in \setN_k \}$.
Then an optimal solution for problem \eqref{LSP} is given by
\begin{equation}\label{Primalclosedform}
	z^*_i = \min_{k \in \M_i} v^*_{k}= v^*_{k_i},~\text{where}~k_i = \min\left\{ k \, : \, i \in \setN_k \right\}, ~\forall~i \in [s].
\end{equation}
Substituting $z^*$ into the \KKT system 
\begin{equation*}
\begin{aligned}
	&\frac{\partial L(z, u)}{\partial z_{i}}= -p_i + \sum_{k \in \M_{i}} u_{ki}=0,~\forall~i \in [s],  \\
	&  u_{ki} \geq 0,~z_{i} \leq v^*_k,~ u_{ki} (z_{i} - v^*_k) = 0,~\forall~k \in [m],~i \in \setN_k,
\end{aligned}
\end{equation*}
we derive an optimal dual solution $u^*$ as follows:
\begin{equation}
	u^*_{ki} = 
	\begin{cases}
		p_i,&~\text{if} ~k = k_i; \\[5pt]
		0,    & ~\text{otherwise},
	\end{cases}~\forall~i \in [s],~k \in \mathcal{M}_i.
\end{equation} 
Plugging $\Psi(v^*)=1-\epsilon-\sum_{i=1}^s p_i v^*_{k_i}$ and $u^*$ into \eqref{linearbfc} and regrouping the terms, we obtain the Benders feasibility cut
\begin{equation}
	\label{feasibilitycut}
	\sum_{k=1}^m p(\setN_k \backslash  \setN_{[k-1]}) v_{k} \geq 1 - \epsilon.
\end{equation}
Here $\setN_{[k-1]} = \bigcup_{k' \in [k-1]} \setN_{k'}$, $\setN_{[0]} = \varnothing$, and $p(\setN_k \backslash  \setN_{[k-1]})= \sum_{i \in \setN_k \backslash  \setN_{[k-1]}} p_i$ for $k \in [m]$. 
Given a solution $v^*\in[0,1]^m$, the above procedure
 for the separation of the Benders feasibility cuts can be implemented with the complexity of $\mathcal{O}(m\log m + \sum_{k \in [m]}|\setN_k|)$, where $\mathcal{O}(m\log  m)$ is taken by sorting $\{v^*_k\}_{k \in [m]}$.

\subsection{Benders decomposition for \CCSCPs with a block structure of $\xi$}\label{subsec:BD2}

In this subsection, we apply the \BD approach to formulation \eqref{minlp} of the \CCSCP with a block structure of $\xi$. 
Similarly, we project the $z$ variables out from the formulation and obtain the master problem 
\begin{equation}\label{mp2}
	\small
		\begin{aligned}
			\min \left\{c^\top x \,:\, Ax \geq v,~\sum_{t=1}^T\eta_t \geq \ln (1-\epsilon),~\Psi_t(v^t,\eta_t) \leq 0,~\forall~t \in [T],~(x,v) \in \{0,1\}^{n+m},~{\eta \in \mathbb{R}^T}\right\},
		\end{aligned}
\end{equation}
where for a solution $(v^*, \eta^*) \in [0,1]^m \times \mathbb{R}^T$, the Benders subproblem \eqref{FP} can be decomposed into $T$ subproblems $\{\Psi_t((v^t)^*,\eta_t^*)\}_{t \in [T]}$ given by
\begin{equation}
		\label{subnonlinear}
		\Psi_t((v^t)^*,\eta_t^*) = \min \left\{\eta^*_t-\ln\left(\sum_{i=1}^s p_{it} z_{it}\right) \,:\,  z_{it} \leq v^*_k ,~\forall~k \in \M^t,~i \in \setN^t_k \right\},~\forall~t \in [T].
\end{equation}

Let $m_t = |\M^t|$ and $\M^t_{i} = \{ k \in \M^t\, : \, i \in \setN^t_k \}$. Without loss of generality, we assume $\M^t=[m_t]$ and $v_1^* \leq \cdots \leq v_{m_t}^*$.
To derive the Benders feasibility cut, let us first consider the case that
$ \min_{k \in \M^t_{i}} v^*_{k}>0$ holds for some $i \in [s]$.
In this case, $\Psi_t((v^t)^*,\eta_t^*)<+\infty$ is well-defined and an optimal solution of problem \eqref{subnonlinear} is given by 
\begin{equation}\label{Primalclosedform2}
	z^*_{it} = \min_{k \in \M^t_{i}} v^*_{k}= v^*_{k_i},~\text{where}~k_i = \min\left\{ k \, : \, i \in \setN^t_k \right\},~\forall~i \in [s].
\end{equation}
Let $u_{ki}$ be the dual variable associated with constraint $z_{it}\leq v_k^*$ in problem \eqref{subnonlinear}, and $\{u^*_{ki}\}$ is an optimal dual solution of problem \eqref{subnonlinear}.
Then the Benders feasibility cut \eqref{BDF2} can be written as
\begin{equation}
	\label{convexbfc}
	0  \geq \Psi_t((v^t)^*, \eta_t^*)  + (\eta_t - \eta_t^*)-\sum_{k \in \M^t} \sum_{i \in \setN^t_k} u^*_{ki} (v_k-v_k^*).
\end{equation}

Next, we provide a closed formula for the Benders feasibility cut \eqref{convexbfc} using primal and dual optimal solutions of problem \eqref{subnonlinear}. 
The Lagrangian function corresponding to problem \eqref{subnonlinear} reads
\begin{equation}
	L(z_{\cdot t}, u) = \eta^*_t-\ln\left(\sum_{i=1}^s p_{it} z_{it} \right) + \sum_{k \in \M^t} \sum_{i \in \setN^t_k} u_{ki}(z_{it} - v^*_k).
\end{equation}
Substituting $z_{\cdot t}^*$ (defined in \eqref{Primalclosedform2}) into the following \KKT system
\begin{equation*}
	\begin{aligned}
	&\frac{\partial L(z_{\cdot t}, u)}{\partial z_{it}} = \frac{-p_{it}}{\sum_{i=1}^s p_{it} z_{it} } + \sum_{k \in \M^t_{i}} u_{ki}=0,~\forall~i \in [s], \\
	& u_{ki} \geq 0,~z_{it} \leq v^*_k,~ u_{ki} (z_{it} - v^*_k) = 0,~\forall~k \in \M^t,~i \in \setN^t_k ,
	\end{aligned}
\end{equation*}
we can see that $u^*$ defined by 
\begin{equation}
	\label{defv}
	u^*_{ki} = 
	\begin{cases}
		\frac{p_{it}}{\delta_t},&~\text{if} ~k = k_i; \\[5pt]
		0,  & ~\text{otherwise},
	\end{cases}~\forall~i \in [s],~k \in \M^t_i,~\text{where}~\delta_t=\sum_{i=1}^s p_{it} v^*_{k_i},
\end{equation}
is an optimal dual solution.
Substituting  $u^*$ and $\Psi_t((v^t)^*, \eta_t^*)=\eta_t^* -\ln\left(\sum_{i=1}^s p_{it} v^*_{k_i} \right) = \eta_t^*-\ln\delta_t$ into \eqref{convexbfc} and regrouping the terms,
the Benders feasibility cut \eqref{convexbfc}  becomes
\begin{equation}
	\label{GBcut2-2}
	\sum_{k=1}^{m_t} {p(\setN^t_k \backslash  \setN^t_{[k-1]})} v_{k}  \geq {\delta}_t (\eta_t + 1 - \ln\delta_t),
\end{equation}
where $\setN^t_{[k-1]} = \bigcup_{k' \in [k-1]} \setN^t_{k'}$, $\setN^t_{[0]} = \varnothing$, and $p(\setN^t_k \backslash  \setN^t_{[k-1]})= \sum_{i \in \setN^t_k \backslash  \setN^t_{[k-1]}} p_{it}$ for $k \in \M^t$.

Next, we consider the case that $ \min_{k \in \M^t_{i}} v^*_{k}=0$ holds for all $i \in [s]$.
In this case, 
$z_{it}=0$ must hold for all $i \in [s]$ for any feasible solution $z_{\cdot t}$ of problem \eqref{subnonlinear}, and thus $\Psi_t((v^t)^*,\eta_t^*)=+\infty$ is not well-defined. 
To bypass this difficulty, we use \eqref{cons:vandz2}, \eqref{cons:vandzbound2}, and \eqref{independentEq} to derive a valid inequality cutting off point $(v^*, \eta^*)$. 
Specifically, from \cref{independentRem}, $\Psi'_t(v^t) \leq 0$, $t \in [T]$, hold for all feasible solutions $(v, \eta)$ of problem \eqref{mp2}, where for a solution $(v^*, \eta^*) \in [0,1]^m \times \mathbb{R}^T$, $\{\Psi'_t((v^t)^*)\}_{t \in [T]}$ are given by
\begin{equation}
	\label{subnonlinear2}
	\Psi'_t((v^t)^*) = \min \left\{1-\epsilon-\sum_{i=1}^s p_{it} z_{it}\,:\,  z_{it} \leq v^*_k ,~\forall~k \in \M^t,~i \in \setN^t_k \right\},~\forall~t\in [T].
\end{equation}
Using the result in \cref{subsec:BD1}, we can derive the Benders feasibility cut (corresponding to $\Psi'_t(v^t) \leq 0$)
\begin{equation}
	\label{GBcut2-3}
	\sum_{k=1}^{m_t} {p(\setN^t_k \backslash  \setN^t_{[k-1]})} v_{k}  \geq 1-\epsilon, 
\end{equation}
which cuts off point $(v^*, \eta^*) $ (as $ \min_{k \in \M^t_{i}} v^*_{k}=0$ holds for all $i \in [s]$).

For a solution $(v^*, \eta^*)\in [0,1]^m \times \mathbb{R}^T$, the Benders feasibility cuts \eqref{GBcut2-2} or \eqref{GBcut2-3} can be separated with the complexity of  $\mathcal{O}(\sum_{t\in [T]} (m_t\log  m_t  + \sum_{k \in [m_t]}|\setN^{t}_k|))$, where $\mathcal{O}(m_t\log  m_t)$ is taken by sorting $\{v^*_k\}_{k \in \M^t}$.
It is worthwhile remarking that unlike the \BD algorithm for formulation \eqref{mip} in which at most one cut is added in one iteration, we can add up to $T$  cuts in one iteration in the \BD algorithm for formulation \eqref{minlp}. 
This feature renders the \BD algorithm able to converge more quickly when solving \eqref{minlp}, especially when the number of blocks is large; see \cref{sec:comparetwoproblem} further ahead.

\section{Strong valid inequalities}
\label{sec:lci}

To further speed up the convergence of the \BnC algorithm based on Benders feasibility cuts, here we investigate the polyhedral structure $\conv(\X)$, where 
\begin{equation}\label{setX}
	\X =\left\{ v \in \{0,1\}^m \stt \Prb\{v \geq \xi\} \geq 1-\epsilon  \right\},
\end{equation}
and develop strong valid inequalities for $\conv(\X)$. 
For the case of a finite discrete distribution of $\xi$ and its special case with a block structure, $\X$ reduces to $\X^g=\proj_{v}(\Y^g)$ and $\X^b=\proj_{v}(\Y^b)$, where
\begin{align*}
		& \rev{\Y^g} = \left\{ (v,z) \in \{ 0,1\}^{m+s} \stt v_k \geq z_i,~\forall ~k \in [m], ~i \in \setS_k, ~\sum_{i=1}^s p_i z_i \geq 1-\epsilon \right\},\\
		& \rev{\Y^b} = \left\{ (v,z,\eta) \in \{ 0,1\}^{m+sT}\times \mathbb{R}^T \stt v_k \geq z_{it},~\forall ~t \in [T], ~k \in \M^t, ~i \in \setS_k^t,\vphantom{\eta_t \leq \ln\left(\sum_{i=1}^s p_{it}z_{it} \right),~\forall~t \in [T],~\sum_{i=1}^T \eta_t \geq \ln(1-\epsilon)} \right.\nonumber\\
		&  \quad\qquad\qquad\qquad\qquad\qquad\qquad \qquad\left.~\eta_t \leq \ln\left(\sum_{i=1}^s p_{it}z_{it} \right),~\forall~t \in [T],~\sum_{t=1}^T \eta_t \geq \ln(1-\epsilon) \right\}.
\end{align*}

Observe that the Benders feasibility cuts \eqref{feasibilitycut} and \eqref{GBcut2-3} are valid for linear relaxations of $\X^g$ and $\X^b$ (obtained by relaxing integrality constraints on variables $v$ in $\X^g$ and $\X^b$, respectively), and thus also valid for $\X^g$ and $\X^b$.
Similarly, summing up the Benders feasibility cuts \eqref{GBcut2-2} for all $t \in [T]$ and using $\sum_{t=1}^T \eta_t \geq \ln(1-\epsilon) $, we obtain $\sum_{t=1}^T \frac{1}{\delta_t} \sum_{k=1}^{m_t} {p(\setN^t_k \backslash  \setN^t_{[k-1]})} v_{k}  \geq T - \sum_{t=1}^T  \ln\delta_t + \ln (1-\epsilon)$, which is valid for the linear relaxation of $\X^b$ and thus also valid for $\X^b$.
Different from the Benders feasibility cuts that only lead to (potentially weak) inequalities that are valid for the linear relaxation of $\X^g$ or $\X^b$, the valid inequalities developed in this section will explicitly take the integrality constraints on variables $v$ into consideration, and are strong (facet-defining) valid inequalities for $\conv(\X^g)$ and $\conv(\X^b)$.

\subsection{Lifted cover inequalities for $\conv(\X)$}
\label{LCIX}
In this subsection, we provide a general procedure for deriving strong valid inequalities for $\conv(\X)$. 
Throughout, we assume that $\Prb\{v \geq \xi\}$ can be efficiently computed for any $v\in \{0,1\}^m$.
This assumption is reasonable for the two special cases $\X^g$ and $\X^b$ (as $\xi$ and $\{\xi^t\}_{t \in [T]}$ have finite discrete distributions, respectively).  

Given a subset $\setC \subseteq [m]$, let $v_{\setC}$ and $\xi_\setC$ be the subvectors of $v$ and $\xi$ formed by the rows in $\setC$, respectively, and define
\begin{equation}\label{lambdaC}
	\lambda(\setC)=\Prb\{\boldsymbol{0} \ngeq \xi_\setC\}.
\end{equation}
Observe that $\lambda(\setC)$ is a nondecreasing set function with respect to $\setC$, which is equal to the probability $\Prb\{v_\setC \ngeq \xi_\setC\}$ when $v_\setC = \boldsymbol{0}$, \ie $v_k=0$ holds for all $k \in \setC$.
If this probability is larger than the confidence parameter $\epsilon$, the probabilistic constraint $\Prb\{v \geq \xi\} \geq 1-\epsilon$ with $v_\setC = \boldsymbol{0}$ cannot be satisfied as $\Prb\{v \geq \xi\}\leq \Prb\{v_\setC \geq \xi_\setC\} = 1-\Prb\{v_\setC \ngeq \xi_\setC\} = 1-\Prb\{\boldsymbol{0} \ngeq \xi_\setC\} < 1- \epsilon$.
Therefore, to ensure that the probabilistic constraint $\Prb\{v \geq \xi\} \geq 1-\epsilon$ is satisfied, $v_k =0$, $k \in \setC$, cannot be simultaneously satisfied. This motivates the following definition of \emph{cover}.
\begin{definition}
	A subset $\setC\subseteq [m]$ is a cover for $\X$ if $\lambda(\setC) > \epsilon$. It is minimal if and only if $\lambda(\setC\backslash\{k\}) \leq \epsilon$ holds for all $k \in \setC$.
\end{definition}
By definition, if $\setC$ is a cover, then the cover inequality  
\begin{equation}\label{coverineq}
	\sum_{k \in\setC}v_k \geq 1
\end{equation}
is valid for $\X$.
Moreover, letting $\mathfrak{C}$ be the set of all minimal covers,
then $\X = \{ v \in \{0,1\}^m \, : \, \sum_{k \in \setC} v_k \geq 1, ~\forall~\setC \in \mathfrak{C} \}$, that is, all cover inequalities provide a linear description of $\X$.

Let $\X(\setC)=  \{ v \in \X \, : \, v_k = 1,~ \forall~k \in [m] \backslash \setC\}$, obtained by fixing $v_k=1$ for all $k \in [m]\backslash \setC$ in $\X$.
The minimal cover inequality \eqref{coverineq} is \emph{strong} with respect to $\conv(\X(\setC))$; indeed, it is facet-defining for $\conv(\X(\setC))$.
In general, however, the cover inequality \eqref{coverineq} could be weak with respect to $\conv(\X)$.
To overcome the weakness, we use the sequential lifting technique \citep{Wolsey1976,Richard2011} to convert inequality \eqref{coverineq} into a strong facet-defining inequality for $\conv(\X)$.
Sequential lifting is a procedure in which the projected variables $v_k$, $k \in [m]\backslash \setC$, are introduced into the inequality \eqref{coverineq} one at a time in some lifting order $\ell_1, \ldots, \ell_{m-|\setC|}$ (this order is also known as \emph{lifting sequence}).
More precisely, let
\begin{equation}
\label{p1dim}
 \sum_{k \in \setC}v_k + \sum_{k \in \setL_{p-1}}\alpha_k (v_k -1)\geq 1
\end{equation}
be a so-far generated inequality for $\X(\setC\cup \setL_{p-1})$, where $\alpha_{k}$'s are the lifting coefficients and $\setL_{p-1}:= \{ \ell_1,\ldots, \ell_{p-1} \}$.
To lift variable $v_{\ell_p}$, i.e., find the largest lifting coefficient $\alpha_{\ell_p}$ such that inequality
\begin{equation}
\label{mediateineq}
\sum_{k \in \setC}v_k + \alpha_{\ell_p} (v_{\ell_p}-1) + \sum_{k \in \setL_{p-1}}\alpha_k (v_k -1)  \geq 1
\end{equation}
is valid for $\X(\setC\cup \setL_{p})$, it suffices to solve the following \emph{lifting problem}
\begin{equation}\label{liftingprob}
	\alpha_{\ell_p} = \min_{v \in \{0,1\}^{|\setC\cup \setL_p|}}\left\{ \sum_{k \in \setC}v_k+ \sum_{k \in \setL_{p-1}}\alpha_k (v_k -1)  -1 \, : \, \Prb\{v_{\setC\cup \setL_p} \geq \xi_{\setC\cup \setL_p}\} \geq 1-\epsilon,~v_{\ell_p} =0  \right\}.
\end{equation}
Repeating the above lifting procedure until $ p = m - |\setC| $, we obtain {the}  \emph{lifted cover inequality}
\begin{equation}
	\label{lci}
	 \sum_{k \in \setC}v_k + \sum_{k \in [m]\backslash \setC}\alpha_k (v_k -1) \geq 1,
\end{equation}
which depends on the lifting order  $\ell_1, \ldots, \ell_{m-|\setC|}$; that is, different lifting orders of $[m]\backslash \setC$ may lead to different lifted cover inequalities.
Since the minimal cover inequality \eqref{coverineq} defines a facet of $\conv(\X(\setC))$, it follows from \cite{Wolsey1976} that the lifted cover inequality \eqref{lci} defines a facet of $\conv(\X)$.
\begin{lemma}[\cite{Wolsey1976}]
	\label{facet}
	For $1\leq p\leq m - |\setC|$, inequality \eqref{p1dim} defines a facet of the polyhedron $\conv(\X(\setC \cup \setL_{p-1}))$. 
	In particular, the lifted cover inequality \eqref{lci} defines a facet of the polyhedron $\conv(\X)$.
\end{lemma}

The above procedure lifts variables fixed at $1$ into a known inequality, called \emph{down-lifting} procedure in the literature \citep{Gu1998}.
In addition to this, we can also use the \emph{up-lifting}
 procedure, where variables are allowed to be fixed at $0$ and introduced to the inequality; see \cite{Gu1998} and \cite{Song2014,Wang2021} for similar lifting procedures arising in deterministic and probabilistic knapsack polytopes, respectively.
In particular, let $\setC$, $\SN_0$, and $\SN_1$ be a partition of $[m]$, where $\SN_0$ and $\SN_1$ are the sets of variables fixed at $0$ and $1$, respectively, and $\setC$ is a minimal cover of $\X \cap \{v\, :\, v_k = 0, ~\forall~k \in \SN_0 \}\cap \{v\, :\, v_k = 1, ~\forall~k \in \SN_1 \} $ (that is, $\lambda(\setC\cup \SN_0) > \epsilon$ and $\lambda(\setC\cup \SN_0\backslash\{k\})\leq \epsilon$ for all $k \in \setC$).
The lifted cover inequality can then be written as 
\begin{equation}\label{lci-end}
	\sum_{k \in \setC}v_k+ \sum_{k \in \SN_0} \alpha_k v_k + \sum_{k  \in \SN_1}\alpha_k (v_k -1)  \geq 1,
\end{equation}
where the lifting coefficients $\alpha_k$ for $k \in \SN_0 \cup \SN_1$ are computed in a similar manner.

Given a point $v^* \in [0,1]^m$ encountered in the \BnC algorithm (of solving the Benders reformulation of problems \eqref{mip} and \eqref{minlp}),  the separation problem of the lifted cover inequalities \eqref{lci-end} asks to either find an inequality violated by point $v^*$ or prove $v^*  \in \conv(\X)$. 
To solve the separation problem, we first apply a heuristic procedure to identify a minimal cover. 
In particular, we let $\setC:= \{k \in [m] \stt v^*_k = 0 \}$ and check whether $\setC$ is a cover; if so, we stop immediately with a cover inequality \eqref{coverineq} violated by $v^*$.
Otherwise, let $\setF := \{k \in [m] \stt 0 < v^*_k < 1\}$ and $k_1, k_2, \ldots, k_{|\setF|}$ be a permutation of $\setF$ such that $v^*_{k_1} \leq v^*_{k_2} \leq \cdots \leq v^*_{k_{|\setF|}}$.
Then for $\ell= 1, \ldots |\setF|$, we iteratively add the $v^*_{k_\ell}$ into $\setC$ until $\setC$ is a cover.
Observe that the resultant cover $\setC$ may not be minimal, so we iteratively remove elements from $\setC$ in a nonincreasing order of values $\{v_k^*\}_{k \in \setC}$ until it becomes a minimal cover.

Now, if $|\setC \cap \setF| \leq 1$, then we lift from the cover inequality $v(\setC) \geq 1$ and perform a down-lifting procedure on variables $\{v_k\}_{k \in [m]\backslash \setC}$ to obtain a lifted cover inequality \eqref{lci}.
We lift variables $\{v_k\}_{k \in [m]\backslash \setC}$ in a nondecreasing order of $\{v^*_k\}_{k \in [m]\backslash \setC}$, since the earlier a variable is lifted, the larger the lifting coefficient will be.
If $|\setC \cap \setF| \geq 2$, we follow \cite{Gu1998,Song2014,Wang2021} to use a three-phase lifting procedure to obtain a lifted cover inequality.
Specifically,  letting $\setC_1 := \{k \in \setC \stt v^*_k > 0 \}$, $\setC_2 := \{k \in \setC \stt v^*_k = 0 \}$, and $\SN_1 =  \{k \in [m] \stt v^*_k = 1\}$, then we start from the cover inequality $v(\setC_1) \geq 1$, perform (i) a down-lifting procedure on  variables $\{v_k\}_{k \in [m] \backslash (\setC \cup \SN_1)}$, (ii) an up-lifting procedure  on variables $\{v_k\}_{k \in \setC_2}$, and (iii)  a down-lifting procedure on variables $\{v_k\}_{k \in \SN_1}$.
Similarly, when lifting variables $\{v_k\}_{k \in [m] \backslash (\setC \cup \SN_1)}$, we use a nondecreasing order of $\{v^*_k\}_{k \in [m]\backslash \setC}$, as to obtain a violated lifted cover inequality.
However, as the lifting of variables $\{v_k\}_{k \in \setC_2}$ and $\{v_k\}_{k \in \SN_1}$ does not contribute to the violation of the lifted cover inequality, we simply use a variable index order instead. 
This also implies that we can abandon the lifting procedure if $v(\setC_1) + \sum_{k \in [m] \backslash (\setC \cup \SN_1)}\alpha_{k} (v_k-1) \geq 1$ is not violated by $v^*$, thereby avoiding further unnecessary computational effort.

\subsection{Lifted cover inequalities for $\conv(\X^g)$}
\label{lciigeneral}

Letting $\setS_{\setC} = \bigcup_{k \in \setC} \setS_k$, 
then for $\X^g$, $\lambda(\setC)$ defined in \eqref{lambdaC} reduces to
\begin{equation}\label{coverdef}
	\lambda(\setC) =\Prb\{\boldsymbol{0} \ngeq \xi_\setC\} = \sum_{i \in [s]} p_i \chi(\boldsymbol{0} \ngeq \xi_{i,\setC}) =  \sum_{i \in \setS_\setC} p_i  = p(\setS_\setC),
\end{equation}
where $\chi(\cdot)$ is an indicator function.
Thus, a subset $\setC\subseteq [m]$ is a cover for $\X^g$ if $p(\setS_{\setC}) > \epsilon$, and it is minimal if and only if $p(\setS_{\setC\backslash \{k\}}) \leq \epsilon$ holds for all $k \in \setC$.
In addition, the lifting problem in \eqref{liftingprob} reduces to
\begin{subequations}
	\label{liftingprobg}
	\begin{align}
		{\alpha}_{\ell_p}= \min \quad & \sum_{k \in \setC}v_k  + \sum_{k \in \setL_{p-1}}\alpha_k (v_k -1) -1\\
		{\text{s.t.}}
		\quad &v_k \geq z_i,~\forall~k \in \setC\cup\setL_{p-1},~i\in \setN_k, \label{liftP1}\\
		\quad & \sum_{i=1}^s p_i z_i \geq 1 -  \epsilon ,~z_i=0,~\forall~i\in  \setS_{\ell_p},\label{liftP2}\\
		\quad & v_{\setC \cup \setL_{p-1}} \in\{0,1\}^{|\setC \cup \setL_{p-1}|},~ z\in\{0,1\}^s.
	\end{align}
\end{subequations}
Problem \eqref{liftingprobg} is a precedence constrained knapsack problem, which is strongly NP-hard \citep{Johnson1983}\rev{, making it challenging to design an empirically efficient algorithm (e.g., a pseudo-polynomial time algorithm) to solve it.}
To resolve the difficulty, we follow \cite{Espinoza2015} to solve its \LP relaxation
\begin{equation}
	\label{liftingprobgU}
	\begin{aligned}
		\hat{\alpha}_{\ell_p}= \min \quad &\sum_{k \in \setC}v_k +\sum_{k \in \setL_{p-1}}\alpha_k (v_k -1)  -1\\
		{\text{s.t.}}\quad & \eqref{liftP1},~\eqref{liftP2}, ~v_{\setC \cup \setL_{p-1}} \in[0,1]^{|\setC \cup \setL_{p-1}|},~ z\in[0,1]^s,
	\end{aligned}
\end{equation}
to derive a lower bound $\hat{\alpha}_{\ell_p}$ for the lifting coefficient ${\alpha}_{\ell_p}$, thereby also obtaining a valid lifted  cover inequality for $\conv(\X^g)$.
Moreover, the above approximation lifting problem \eqref{liftingprobgU} can be solved using the customized algorithm of \cite{Chicoisne2012} with the complexity of $\mathcal{O}(n \gamma\log n)$, where $n=|\setC\cup \setL_{p-1}| + s$ is the number of variables in \eqref{liftingprobgU} and $\gamma=\sum_{k \in \setC\cup \setL_{p-1}} |\setS_k|$ is the number of constraints in \eqref{liftP1}.
Based on our computational experience, this customized algorithm is much more efficient than the direct use of an \LP solver. 

Note that in contrast to the optimal value of the exact lifting problem \eqref{liftingprobg}, which is nonnegative and integral, the optimal value of the approximation lifting problem  \eqref{liftingprobgU}, however, could be negative and fractional.
To overcome this weakness and obtain a stronger approximation lifting coefficient, we can use $\max\{ \lceil \hat{\alpha}_{\ell_p} \rceil , 0 \}$ as the lifting coefficient instead of $\hat{\alpha}_{\ell_p} $.

\subsection{Lifted cover inequalities for $\conv(\X^b)$}\label{lcispecial}

Given a subset $\setC \subseteq [m]$, let $\setC^t:= \setC \cap \M^t $, $\setS_{\setC^t} =  \bigcup_{k \in \setC^t} \setS^t_k$, and $p_{\cdot t}(\setS_{\setC^t}):=
\sum_{i \in \setS_{\setC^t}} p_{it}$, where $t \in [T]$.
Similar to \eqref{coverdef}, $p_{\cdot t}(\setS_{\setC^t}):=
\sum_{i \in \setS_{\setC^t}} p_{it}$ denotes  the probability $\Prb\{\boldsymbol{0} \ngeq \xi_{\setC^t}\}$.
Thus, $\lambda(\setC)$ defined in \eqref{lambdaC} reduces to 
\begin{equation}\label{ineqC}
	\lambda(\setC) =\Prb\{\boldsymbol{0} \ngeq \xi_\setC\} \overset{(a)}{=} \prod_{t = 1}^T \Prb\{\boldsymbol{0} \ngeq \xi_{\setC^t}\} =  
	\prod_{t = 1}^T \sum_{i \in \setS_{\setC^t}} p_{it}=\prod_{t = 1}^T p_{\cdot t}(\setS_{\setC^t}),
\end{equation}
where (a) follows from the fact that the random vectors $\xi_{\setC^1}, \ldots, \xi_{\setC^T}$ are independent 
(as $\xi^1, \ldots, \xi^T$ are independent and  $\xi_{\setC^1}, \ldots, \xi_{\setC^T}$ are subvectors of $\xi^1, \ldots, \xi^T$).  
Thus, we can use \eqref{ineqC} to determine whether $\setC$ is a (minimal) cover. 
In addition, letting $t_0 \in [T]$ be such that $\ell_p \in \M^{t_0}$, then the lifting problem in \eqref{liftingprob} reduces to 
\begin{subequations}\label{liftingprobb}
	\begin{align}
		\alpha_{\ell_p}= \min \quad & \sum_{k \in \setC}v_k+ \sum_{k \in \setL_{p-1}}\alpha_k (v_k -1) -1\\
		{\text{s.t.}}
		\quad &v_k \geq z_{it},~\forall~t \in [T],~k \in \setC\cup \setL_{p-1},~i\in \setN_k^t, \label{liftbP1}\\
		\quad & ~\eta_t \leq \ln\left(\sum_{i=1}^s p_{it}z_{it} \right),~\forall~t \in [T],~\sum_{t=1}^T \eta_t \geq \ln(1-\epsilon) \label{liftbP2},\\
		\quad & v_{\setC \cup \setL_{p-1}} \in\{0,1\}^{|\setC \cup \setL_{p-1}|},~ z\in\{0,1\}^{s\times T},~z_{i t_0}=0,~\forall~i \in \setS_{\ell_p}^{t_0}.\label{liftbP3}
	\end{align}
\end{subequations}

Note that different from problem \eqref{liftingprobg} which is an \MIP problem, the lifting problem \eqref{liftingprobb} is \rev{a relatively difficult \MINLP} problem as it includes the  nonlinear constraints $\eta_t \leq \ln\left(\sum_{i=1}^s p_{it}z_{it}\right)$ in \eqref{liftbP2}.
In order to avoid the nonlinearity in \eqref{liftingprobb}, observe that
$$
	\ln(1-\epsilon) \leq \sum_{t=1}^T \eta_t \leq \sum_{t=1}^T \ln \left(\sum_{i=1}^s p_{it}z_{it} \right)\overset{(a)}{\leq} \sum_{t=1}^T \left(\sum_{i=1}^s p_{it}z_{it}-1\right),
$$
where (a) follows from the fact that $\ln(\sigma) \leq \sigma - 1$ holds for any $\sigma \in (0,1]$.
Thus,
\begin{equation}\label{knaps}
	\sum_{t=1}^T \sum_{i =1}^s p_{it}z_{it} \geq T + \ln (1-\epsilon)
\end{equation}
is a valid (linear) inequality for problem \eqref{liftingprobb}.
Substituting the constraints in \eqref{liftbP2} by \eqref{knaps}, we obtain an \MIP relaxation of problem \eqref{liftingprobb}:
\begin{equation}\label{liftingprobbU}
	\begin{aligned}
		\bar{\alpha}_{\ell_p}= \min \quad & \sum_{k \in \setC}v_k+ \sum_{k \in \setL_{p-1}}\alpha_k (v_k -1) -1\\
		{\text{s.t.}}
		\quad &\eqref{liftbP1}, ~\eqref{liftbP3}, ~\eqref{knaps}.
	\end{aligned}
\end{equation}
Observe that $\bar{\alpha}_{\ell_p} \leq \alpha_{\ell_p}$ holds, and thus solving problem \eqref{liftingprobbU} will obtain a valid lifting coefficient.
Now, similar to problem \eqref{liftingprobg}, problem \eqref{liftingprobbU} is also a precedence constrained knapsack problem.
Therefore, we can leverage the results in \cref{lciigeneral} to further derive valid lifting coefficients, thereby obtaining valid lifted cover inequalities for \rev{$\conv(\X^b)$}.

\subsection{Clique-based inequalities}
\label{subsec:clique}

As shown in \cref{LCIX}, in order to obtain a lifted cover inequality \eqref{lci-end} from a minimal cover inequality \eqref{coverineq}, one has to solve $m - |\setC|$ lifting problems of the form \eqref{liftingprob}, which could be computationally demanding (even though we can solve the \LP relaxations to obtain valid lifting coefficients when $\xi$ has a finite discrete distribution with or without a block structure).
To further speed up the computational procedure, in this section, we will present a subclass of the lifted cover inequalities, which can be constructed without the need for the \rev{potentially computationally} expensive lifting procedure\rev{, making them particularly suitable to be embedded in a \BnC framework to solve the \CCSCP.}

To proceed, we consider a minimal cover $\setC$ with size $|\setC|=2$, which we call two-cover. 
The number of two-covers is upper bounded by $\mathcal{O}(m^2)$.
Therefore, by checking whether $\lambda(\{j,k\})>\epsilon$ holds for all pairs $j,k\in [m]$ with $j \neq k$
(where $\lambda(\cdot)$ is defined in \eqref{lambdaC}), we can identify all two-covers and construct the \emph{two-cover induced graph} as follows.
\begin{definition}
	A two-cover induced graph $G =([m], \mathcal{E})$ contains the edge $\{j,k\}$ if and only if $\lambda(\{j,k\}) > \epsilon$.
\end{definition}

Letting $\setC = \{j,k\}$, then the cover inequality \eqref{coverineq} reduces to $v_j + v_k \geq 1$.
Two-cover inequalities can be strengthened using the concept of \emph{cliques}. 
More specifically, a clique with vertex set $\CQ$ is a fully  connected subgraph in $G$; that is, for any pair $j \in \CQ$ and $k \in \CQ \backslash \{j\}$, it follows $\{j,k\} \in \mathcal{E}$. 
A maximal clique is a clique in graph $G$ that cannot be extended by including any more vertices. 
For a maximal clique with vertex set $\CQ$, since at most one variable $v_k$, $k \in \CQ$, can take the value of zero, we obtain the maximal clique inequality 
\begin{equation}\label{maxclique}
	\sum_{k \in \CQ} v_k \geq |\CQ|-1,
\end{equation}
which is valid for $\X$ and stronger than the two-cover inequalities $v_j + v_k \geq 1$, $j, k \in \CQ$.

For any maximal clique with vertex set $\CQ$, the two-cover inequality $v_j + v_k \geq 1$ with $j, k \in \CQ$ is valid for $\X(\{j,k\})$.
By first lifting variables $ \{v_{k'} \, : \, {k'} \in \CQ\backslash\{j,k\}\}$ (with any lifting order of $\CQ\backslash\{j,k\}$) and then lifting variables $ \{v_{k'} \, : \, {k'} \in [m]\backslash \CQ\}$ (with any lifting order of $[m]\backslash \CQ$), we can also obtain the maximal clique inequality \eqref{maxclique}.
As a result, the maximal clique inequalities are special cases of the lifted cover inequalities in \eqref{lci-end}. 
Therefore, it immediately follows from \cref{facet} that
\begin{corollary}
	The maximal clique inequalities are facet-defining for $\conv(\X)$.
\end{corollary}

Although the maximal clique inequalities are special cases of lifted cover inequalities, they are very attractive from a computational perspective due to the following reasons.
First, compared with the lifted cover inequalities whose computation requires to solve $m-|\setC|$ lifting problems of the form \eqref{liftingprob} (or their \LP relaxations), \emph{the maximal clique inequalities admit a much more efficient computational procedure}---one just needs to extend a clique in a graph until no more vertices can be included with the complexity of $\mathcal{O}(m^2)$.
Second, given a solution $v^* \in [0,1]^m$ encountered in the \BnC algorithm (of solving the Benders reformulation of problems \eqref{mip} and \eqref{minlp}),
the problem of finding a violated maximal clique inequality is equivalent to solving a maximum weighted clique problem \citep{Atamturk2000,Achterberg2007}. 
Although this problem is NP-hard \citep[Theorem 9.2.9]{Grotschel1993}, there are various effective heuristic algorithms for finding violated inequalities in the literature; see \cite{Atamturk2000,Achterberg2007,Burke2012,Rebennack2012,Brito2021} among many of them.  
Moreover, state-of-the-art \MIP solvers have implemented effective algorithms to find violated maximal clique inequalities, which allows us to simply state the two-cover inequalities as constraints and then facilitate the powerful \MIP solvers' internal separation procedure for finding violated clique inequalities.

\section{Computational results}\label{section:num}

In this section, we present computational results to illustrate the effectiveness of the proposed \BD algorithms for solving  \CCSCPs based on formulations \eqref{mip} and \eqref{minlp}.
In particular, 
{we first evaluate} the performance effect of the proposed lifted cover inequalities for the proposed \BD algorithm. 
Then, we present computational results to show the effectiveness of the proposed \BD algorithms over state-of-the-art approaches for solving problems \eqref{mip} and \eqref{minlp}.
Finally, we compare the performance of the proposed \BD algorithms for solving problems \eqref{mip} and \eqref{minlp} when applying them as 
\rev{sample approximation problems} to solve \CCSCPs.
\rev{In Online Appendix C, we further present the computational results to analyze the effect of the scenario size on the quality of the solution returned by solving the sample approximation problem.}

\subsection{Implementation}\label{sec:implement}

The proposed \BD algorithms were implemented in Julia 1.7.3 using \cplex 20.1.0. 
We use a branch-and-Benders-cut (\BnBC) approach \citep{Rahmaniani2017} to implement the proposed \BD algorithms for solving the \CCSCPs, where the Benders feasibility cuts are separated on the fly at the nodes of the search tree.
In addition, when using the lifted cover inequalities, we separate them at the root node of the search tree.
This approach can be implemented via the cut callback framework available in modern general-purpose \MIP solvers
and has been widely applied to implement the \BD algorithm for various problems; see \cite{Gendron2016,Fischetti2016,Bodur2017,Cordeau2019} among many of them.
Before starting the \BnBC algorithm, we apply the preprocessing technique investigated in \cite{Lejeune2008,Lejeune2010,Luedtke2008} to simplify formulations \eqref{mip} and \eqref{minlp}, and add all two-cover inequalities in \cref{subsec:clique} as constraints to initialize the relaxed master problem.
Note that with the two-cover inequalities in the relaxed master problem, \cplex can automatically strengthen the two-cover inequalities into a subset of maximal clique inequalities \eqref{maxclique} and separate the remaining maximal clique inequalities on the fly.

To further speed up the \BnBC algorithm, we follow \cite{Fischetti2016} to implement a \emph{restart} mechanism to
increase the information available (i.e., effective cuts and heuristic solutions) at the root node, so that more variable fixings and generation of internal cuts will be performed at the root node.
Specifically, before entering into the final \BnC process, we terminate the algorithm immediately after solving the root node (through setting \texttt{CPX\_PARAM\_NODELIM=1}), state the generated Benders feasibility cuts and lifted cover inequalities, stored in our own data structure, as static constraints in the relaxed master problem, and update the incumbent solution. 
We repeat the above process twice and then go to the final \BnC process.

In our experiments, the parameters of \cplex were set to run the code in a single-threaded mode, with a time limit of 7200 seconds and a relative \MIP gap of 0\%. 
Unless otherwise stated, all other parameters in \cplex were set to their default values. 
The computational experiments were conducted on a cluster of computers equipped with Intel(R) Xeon(R) Gold 6140 CPU @ 2.30GHz.
\red{All code and data of the experiments are available at  \cite{LvChenChenDai2025}.}

\subsection{Testset of \CCSCP instances}\label{sec:testset}
Following \cite{Saxena2010}, we construct \CCSCP instances using the 60 deterministic \SCP instances \citep{Beasley1990}\rev{; the numbers of rows and columns $(m,n)$ of matrix $A$ in these instances are $(50,500), (200,1000), (200,2000)$, $(300,3000),(400,4000),(500,5000)$ with the corresponding numbers of instances being $5, 15,10,10$, $10,10$, respectively.} 
Specifically, we construct instances with two block sizes, namely 10 and 20,
and instances that do not have a block structure of the random variable $\xi $ (that is, $\xi$ has only a single block).
We use two probability distributions of $\xi$ called \emph{circular} and \emph{star} distributions \citep{Beraldi2002,Saxena2010} to construct scenarios of $\{\xi_i\}$ or $\{\xi_i^t\}$. 
For the circular distribution, the $k$-th entry of the random vector $\xi$ is set to $\max\{Y_{k}, Y_{(k \bmod m)+1}\}$, where $\{Y_k\}_{k \in [m]}$ are independent Bernoulli random variables following the distribution $\Pr\{Y_k = 1\}  = \alpha_k$, $\alpha_k$ are uniformly chosen from $[0.01, 0.0275]$, and
$\bmod$ denotes the modulo operator.
For the star distribution, the $k$-th entry of the random vector $\xi$ is set to $0$ if $V_k \leq 1$ and $1$ otherwise, where $V_k = Y_k + Y_{m+1}$,  $\{Y_k\}_{k \in [m+1]}$ are independent Poisson random variables following the distribution $\Pr(Y_k = \tau)= \frac{e^{-\lambda_k}(\lambda_k)^\tau}{\tau!}$ ($\tau\in \mathbb{Z}_+$), and $\lambda_k$ are uniformly chosen from $[0.1, 0.2]$.
The distributions of the random variables $\xi^t$ can be described in a similar manner.
In our random construction procedure, the scenario size $s$ and the confidence parameter $\epsilon$ are chosen from \rev{$\{10^3, 3\!\times\!10^3, 5\!\times\!10^3, 10^4, 10^6\}$} and $\{0.05,  0.1\}$, respectively.
In total, there are \rev{$3600$} \CCSCP instances of problems \eqref{mip} and \eqref{minlp}, respectively, in our testset.

\subsection{Performance effect of lifted cover inequalities}\label{sec:inequalities}

In this subsection, we evaluate the performance effect of the lifted cover inequalities in \cref{sec:lci} for solving \CCSCPs with and without the consideration of a block structure of $\xi$ (i.e., problems \eqref{minlp} and \eqref{mip}).
Let us first consider the subclass of them: the maximal clique  inequalities \eqref{maxclique}, 
which do not need to invoke the potentially \rev{computationally} expensive lifting procedure; see \cref{subsec:clique}.
To do this, we compare the following two settings:
\begin{itemize}
	\item[$\bullet$] \tbBnBC: problem \eqref{mip} or \eqref{minlp} is solved using the proposed \BD algorithm in which 
		the two-cover inequalities initialize the relaxed master problem and 
		the Benders feasibility cuts are separated on the fly at the nodes of the search tree (as mentioned earlier, \cplex can automatically strengthen the two-cover inequalities into maximal clique inequalities \eqref{maxclique});
		\item[$\bullet$] \tbBnBCC: problem \eqref{mip} or \eqref{minlp} is solved using \tbBnBC without the two-cover inequalities.
\end{itemize}

\cref{mip:clique,minlp:clique} summarize the computational results.
For each setting, we report the total number of instances solved within the time limit (\tbS), the average \CPU time in seconds (\tbT), the average number of explored nodes (\tbN), and the average percentage of gap improvement defined by $\tbGC = \frac{z_{\text{root}} - z_{\text{LP}}}{z-z_{\text{LP}}} \times 100 \%$, where $z$ is the objective value of the optimal solution or the best incumbent of problem \eqref{mip} or \eqref{minlp},
$z_{\text{LP}}$ is the optimal value of its \LP relaxation, and $z_{\text{root}}$ is the \LP relaxation bound obtained at the root node.
For instances that cannot be solved within the time limit, we report the average relative end gap (\tbG) in percentage returned by \cplex.
Under setting \tbBnBC, we additionally report the average number of the two-cover inequalities (\tbNCI). 
Throughout the paper, all averages are reported as the shifted geometric mean with a shift of $1$; see \cite{Achterberg2007}.

\begin{table}[t]
	\centering
	\small
	\renewcommand{\arraystretch}{1.2}
	\addtolength{\tabcolsep}{-2pt}
	\caption{\small Performance comparison of \tbBnBCC and \tbBnBC on problem \eqref{mip}.}\label{mip:clique}
	\begin{tabular}{llcccccccccccc}
		\toprule
		&     &  \texttt{All}  &  \multicolumn{5}{c}{\tbBnBCC}                  & \multicolumn{5}{c}{\tbBnBC}         \\
		\cmidrule(r){4-8} \cmidrule(r){9-14}   
		&     &                 &    \tbS & \tbT & \tbG &\tbN &\tbGC       & \tbS & \tbT & \tbG & \tbN & \tbGC & \tbNCI    \\
		\hline
		$\epsilon$            & 0.05      & 1800   & 1530 &    90.2 &      19.1 &     59 &      15.3       & \textbf{1635} & \textbf{59.9} & \textbf{13.7} & \textbf{24} & \textbf{25.2} &     27571 \\
		& 0.1       & 1800   & \textbf{1404} & \textbf{201.9} &      \textbf{15.0} & \textbf{315} &       \textbf{5.9}       & 1400 &   202.3 & \textbf{15.0} &    316 & \textbf{5.9} &        11 \\
		\hline
		\texttt{$(m,n)$}       
		& $(50,500)$  &  300   & \textbf{300} &    23.3 & \textbf{0.0} &    325 &       0.7       & \textbf{300} & \textbf{20.3} & \textbf{0.0} & \textbf{222} & \textbf{1.8} &        51 \\
		& $(200,1000)$  &  900   & \textbf{900} &    27.4 & \textbf{0.0} &     19 &      24.6       & \textbf{900} & \textbf{24.8} & \textbf{0.0} & \textbf{13} & \textbf{30.6} &       422 \\
		& $(200,2000)$  &  600   & \textbf{600} &    30.7 & \textbf{0.0} &     18 &      35.7       & \textbf{600} & \textbf{27.0} & \textbf{0.0} & \textbf{11} & \textbf{41.2} &       422 \\
		& $(300,3000)$  &  600   & \textbf{600} &    79.7 & \textbf{0.0} &    545 &       9.0       & \textbf{600} & \textbf{70.8} & \textbf{0.0} & \textbf{399} & \textbf{11.7} &       741 \\
		& $(400,4000)$  &  600   & \textbf{504} &   476.7 &       \textbf{4.4} &   3031 &      10.5       &  500 & \textbf{320.7} & \textbf{4.4} & \textbf{1275} & \textbf{13.0} &      1324 \\
		& $(500,5000)$  &  600   &   30 &  6970.7 &      20.4 &  58685 &       1.0       & \textbf{135} & \textbf{4859.2} & \textbf{18.1} & \textbf{23908} & \textbf{1.6} &      1417 \\
		\bottomrule
	\end{tabular}
\end{table}

\begin{table}[t]
	\centering
	\small
	\renewcommand{\arraystretch}{1.2}
	\addtolength{\tabcolsep}{-2pt}
	\caption{\small Performance comparison of \tbBnBCC and \tbBnBC on problem \eqref{minlp}.}\label{minlp:clique}
	\begin{tabular}{llcccccccccccc}
		\toprule
		&     &  \texttt{All}  &  \multicolumn{5}{c}{\tbBnBCC}                  & \multicolumn{5}{c}{\tbBnBC}         \\
		\cmidrule(r){4-8} \cmidrule(r){9-14}   
		&     &                 &    \tbS & \tbT & \tbG &\tbN &\tbGC       & \tbS & \tbT & \tbG & \tbN & \tbGC & \tbNCI    \\
		\hline
		$\epsilon$            & 0.05      & 1800   & 1644 &    62.7 &      18.2 &     69 &      17.3       & \textbf{1701} & \textbf{50.7} & \textbf{14.1} & \textbf{38} & \textbf{27.2} &     27643 \\
		& 0.1       & 1800   & 1540 &    93.5 &      18.1 & \textbf{90} &      10.5       & \textbf{1555} & \textbf{90.6} & \textbf{14.6} &     91 & \textbf{11.8} &        11 \\
		\hline
		\texttt{$(m,n)$}       & $(50,500)$  &  300   & \textbf{300} &    18.5 & \textbf{0.0} &    172 &       1.2       & \textbf{300} & \textbf{17.6} & \textbf{0.0} & \textbf{155} & \textbf{2.1} &        52 \\
		& $(200,1000)$  &  900   & \textbf{900} &    20.8 & \textbf{0.0} &      7 &      40.6       & \textbf{900} & \textbf{20.6} & \textbf{0.0} & \textbf{5} & \textbf{44.6} &       423 \\
		& $(200,2000)$  &  600   & \textbf{600} &    22.3 & \textbf{0.0} &      4 &      55.9       & \textbf{600} & \textbf{21.1} & \textbf{0.0} & \textbf{3} & \textbf{59.5} &       423 \\
		& $(300,3000)$  &  600   &  593 &    51.1 &       1.3 &    203 &      13.2       & \textbf{597} & \textbf{45.0} & \textbf{0.7} & \textbf{147} & \textbf{16.7} &       737 \\
		& $(400,4000)$  &  600   &  506 &   133.9 &       9.0 &    623 &       7.2       & \textbf{525} & \textbf{91.2} & \textbf{5.9} & \textbf{307} & \textbf{13.8} &      1340 \\
		& $(500,5000)$  &  600   &  285 &  2959.0 &      22.2 &  50031 &       2.5       & \textbf{334} & \textbf{2618.7} & \textbf{18.5} & \textbf{45984} & \textbf{4.4} &      1422 \\
		\bottomrule
	\end{tabular}
\end{table}

We can observe from \cref{mip:clique,minlp:clique} that  for instances with $\epsilon = 0.1$,  the number of added two-cover inequalities is very small; see column \tbNCI.
This is because when $\epsilon$ is large, the condition $\lambda(\{j,k\}) \geq \epsilon$ is likely not to be satisfied, and hence only a small number of two-cover inequalities are constructed.
Due to this, the performance effect of the two-cover inequalities (or their strengthened version, maximal clique inequalities) on the proposed \BD algorithm is neutral on instances with $\epsilon = 0.1$. 
In contrast, for instances with $\epsilon = 0.05$ which is relatively small, 
a large number of two-cover inequalities can be derived from problems \eqref{mip} and \eqref{minlp}.
Due to this, \cplex can construct more clique inequalities, which is evidenced by column \tbGC where we observe that the gap improvement returned by \tbBnBC is much better than that returned by \tbBnBCC.
Due to the improvement on the \LP relaxation bound, the performance of \tbBnBC is much better than that of \tbBnBCC.
Overall, for problem \eqref{mip}, \tbBnBC can solve \rev{$1635$} instances among the \rev{$1800$} instances to optimality, whereas \tbBnBCC can only solve \rev{$1530$} of them to optimality, and the average \CPU time and number of explored nodes are reduced by factors of \rev{$1.5$} and \rev{$2.5$}, respectively;
for problem \eqref{minlp},  \tbBnBC can solve \rev{$1701$} instances to optimality, whereas \tbBnBCC can only solve \rev{$1644$} of them to optimality, and the average \CPU time and number of explored nodes are reduced by factors of \rev{$1.2$} and $1.8$, respectively.
\rev{When comparing the results grouped by the numbers of rows and columns $ (m,n)$, we see that with the increasing of $(m,n)$,
solving problems \eqref{mip} and \eqref{minlp} becomes more difficult and the maximal clique inequalities become more effective.
In particular, for the case with $(m,n)=(500,5000)$, the proposed maximal clique inequalities enable to solve $105$ and $49$ more instances of problems \eqref{mip} and \eqref{minlp}, respectively, to optimality within the time limit of $7200$ seconds.}

Next, we evaluate the performance effect of the (general) lifted cover inequalities \eqref{lci-end} in \rev{\cref{lciigeneral,lcispecial}}, which, compared with the maximal clique inequalities, require to solve the \LP relaxations of the lifting problems \eqref{liftingprobg} and \eqref{liftingprobbU} using the customized algorithm of \citet{Chicoisne2012}.
To do this, we compare the performance of \tbBnBC, where the maximal clique inequalities \eqref{maxclique} are added,  with
\begin{itemize}
	\item[$\bullet$] \tbBnBCCL: problem \eqref{mip} or \eqref{minlp} is solved using \tbBnBC with the lifted cover inequalities in \cref{lciigeneral} or \cref{lcispecial}, respectively.
\end{itemize}
\cref{mip:lci,minlp:lci} report the computational results of \tbBnBC and \tbBnBCCL on  instances \rev{with $10^3,~3\!\times\!10^3,~5\!\times\!10^3$, and $10^4$ scenarios.}
We do not report the computational results on instances with $10^6$ scenarios as solving the \LP relaxation of the lifting problem \eqref{liftingprobg} or \eqref{liftingprobbU} is too time-consuming for the customized algorithm of \citet{Chicoisne2012}.
Under setting \tbBnBCCL, we report the average \CPU time spent in the \rev{computation} of the lifted cover inequalities in seconds (\tbST) and the average number of added lifted cover inequalities (\tbNLCI).\looseness=-1

From \cref{minlp:lci}, we observe from column \tbNLCI that for problem \eqref{minlp},  only a small number of lifted cover inequalities are added, resulting in a small improvement on the \LP relaxation bound.
Consequently, the number of explored nodes does not significantly decrease.
In contrast, for problem \eqref{mip}, \cref{mip:lci} shows that a large number of lifted cover inequalities can be derived, especially for instances with $\epsilon = 0.1$.
As a result, compared with \tbBnBC, \tbBnBCCL achieves a much better gap improvement and a much smaller number of explored nodes; 
the number of explored nodes is reduced by factors $1.2$ and \rev{$3.4$} for instances with $\epsilon = 0.05$ and $\epsilon = 0.1$, respectively.
Unfortunately, for both problems \eqref{mip} and \eqref{minlp}, solving the \LP relaxations of the lifting problems \eqref{liftingprobg} and \eqref{liftingprobbU} is very computationally demanding,
\rev{especially for instances with a large $m$ (as the problem sizes of the lifting problems are large).}
Thus, the (general) lifted cover inequalities \eqref{lci-end} cannot improve the overall performance of \tbBnBC.
Nevertheless, for problem \eqref{mip}, the addition of the lifted cover inequalities \eqref{lci-end} enables to solve \rev{$28$} more instances with \rev{$(m,n)=(400,4000)~\text{or}~(500,5000)$} to optimality, suggesting that the lifted cover inequalities can be used to tackle challenging instances of problem \eqref{mip} \rev{(with large numbers of rows and columns)}.

\begin{table}[t]
	\centering
	\small
	\renewcommand{\arraystretch}{1.2}
	\addtolength{\tabcolsep}{-3pt}
	\caption{\small Performance comparison of \tbBnBC and \tbBnBCCL on problem \eqref{mip}.}\label{mip:lci}
	\begin{tabular}{llccccccccccccc}
		\toprule
		&     &  \texttt{All}  &    \multicolumn{5}{c}{\tbBnBC}                         & \multicolumn{7}{c}{\tbBnBCCL}         \\
		\cmidrule(r){4-8} \cmidrule(r){9-15}   
		&     &                 &    \tbS & \tbT & \tbG &\tbN &\tbGC     & \tbS & \tbT & \tbG & \tbN & \tbGC & \tbST  & \tbNLCI \\
		\hline
		$\epsilon$            & 0.05      & 1440   & 1309 & \textbf{56.0} &      14.2 &     41 &      24.2      & \textbf{1320} &   134.8 & \textbf{14.0} & \textbf{33} & \textbf{30.2} &   48.6  &     13  \\
		& 0.1       & 1440   & 1123 & \textbf{185.9} &      16.5 &    311 &       5.7      & \textbf{1140} &   963.7 & \textbf{16.2} & \textbf{91} & \textbf{17.6} &  551.3  &    194  \\
		\hline
		\texttt{$(m,n)$}       
		& $(50,500)$  &  240   & \textbf{240} & \textbf{19.4} & \textbf{0.0} &    220 &       1.8      & \textbf{240} &    38.6 & \textbf{0.0} & \textbf{176} & \textbf{7.6} &   19.3  &     27  \\
		& $(200,1000)$  &  720   & \textbf{720} & \textbf{22.4} & \textbf{0.0} &     13 &      28.9      & \textbf{720} &   116.0 & \textbf{0.0} & \textbf{6} & \textbf{51.7} &   71.3  &     32  \\
		& $(200,2000)$  &  480   & \textbf{480} & \textbf{24.5} & \textbf{0.0} &     11 &      41.2      & \textbf{480} &   132.3 & \textbf{0.0} & \textbf{3} & \textbf{73.9} &   83.0  &     38  \\
		& $(300,3000)$  &  480   & \textbf{480} & \textbf{63.1} & \textbf{0.0} &    402 &      10.9      & \textbf{480} &   441.2 & \textbf{0.0} & \textbf{203} & \textbf{21.8} &  354.1  &     89  \\
		& $(400,4000)$  &  480   &  403 & \textbf{298.3} &       4.8 &   1424 &      12.7      & \textbf{420} &   890.8 & \textbf{4.4} & \textbf{773} & \textbf{24.2} &  493.0  &     92  \\
		& $(500,5000)$  &  480   &  109 & \textbf{4822.0} &      18.5 &  34981 &       1.6      & \textbf{120} &  5372.8 & \textbf{18.3} & \textbf{31040} & \textbf{3.0} &  501.8  &     66  \\
		\bottomrule
	\end{tabular}
\end{table}

\begin{table}[t]
	\centering
	\small
	\renewcommand{\arraystretch}{1.2}
	\addtolength{\tabcolsep}{-3pt}
	\caption{\small Performance comparison of \tbBnBC and \tbBnBCCL on problem \eqref{minlp}.}\label{minlp:lci}
	\begin{tabular}{llccccccccccccc}
		\toprule
		&     &  \texttt{All}  &    \multicolumn{5}{c}{\tbBnBC}                         & \multicolumn{7}{c}{\tbBnBCCL}         \\
		\cmidrule(r){4-8} \cmidrule(r){9-15}   
		&     &                 &    \tbS & \tbT & \tbG &\tbN &\tbGC     & \tbS & \tbT & \tbG & \tbN & \tbGC & \tbST  & \tbNLCI \\
		\hline
		$\epsilon$            & 0.05      & 1440   & \textbf{1373} & \textbf{42.9} & \textbf{13.5} &     53 &      26.8      & 1372 &    48.9 &      13.7 & \textbf{52} & \textbf{26.9} &    1.9  &     $<$1  \\
		& 0.1       & 1440   & 1261 & \textbf{75.6} &      16.8 &    100 &      11.9      & \textbf{1271} &   182.0 & \textbf{13.8} & \textbf{95} & \textbf{13.6} &   40.8  &      3  \\
		\hline
		$(m,n)$       
		& $(50,500)$  &  240   & \textbf{240} & \textbf{16.5} & \textbf{0.0} & \textbf{156} &       1.9      & \textbf{240} &    17.6 & \textbf{0.0} &    158 & \textbf{1.9} &    1.7  &     $<$1  \\
		& $(200,1000)$  &  720   & \textbf{720} & \textbf{17.8} & \textbf{0.0} &      6 &      44.1      & \textbf{720} &    36.4 & \textbf{0.0} & \textbf{5} & \textbf{45.3} &    9.5  &      1  \\
		& $(200,2000)$  &  480   & \textbf{480} & \textbf{18.5} & \textbf{0.0} &      \textbf{3} &      59.3      & \textbf{480} &    36.6 & \textbf{0.0} & \textbf{3} & \textbf{61.0} &    9.3  &      1  \\
		& $(300,3000)$  &  480   &  477 & \textbf{35.5} & \textbf{0.0} &    151 &      17.1      & \textbf{480} &    65.3 & \textbf{0.0} & \textbf{140} & \textbf{18.8} &   11.2  &      1  \\
		& $(400,4000)$  &  480   &  421 & \textbf{70.7} &       6.3 &    327 &      14.1      & \textbf{434} &   131.8 & \textbf{4.9} & \textbf{315} & \textbf{16.5} &   12.6  &      1  \\
		& $(500,5000)$  &  480   & \textbf{296} & \textbf{2181.8} &      19.4 & \textbf{50004} &       4.4      &  289 &  2328.0 & \textbf{17.4} &  50036 & \textbf{4.9} &    9.8  &      1  \\
		\bottomrule
	\end{tabular}
\end{table}

\subsection{Comparison with the state-of-the-art approaches}\label{sec:state-of-the-art}

In this subsection, we compare the performance of the proposed \BD algorithm with state-of-the-art approaches for solving \CCSCPs with and without the consideration of a  block structure of $\xi$.

We first consider the \CCSCP with an arbitrary finite discrete distribution of $\xi$, where the underlying formulation is problem \eqref{mip},
and compare the performance of the proposed \BD algorithm with the \cplex's default \BnC and automatic \BD algorithms (called \tbCPX and \tbAUTO, respectively).
The results are summarized in \cref{ijoc-cpxbd}. 
From \cref{ijoc-cpxbd}, we see that 
\rev{\tbCPX outperforms \tbBnBC on \CCSCP instances with $10^3$ scenarios.
	This is not surprising, as the \BD algorithm typically draws an advantage over the case for which the size of the Benders reformulation  can be significantly reduced compared to the original problem. 
	For \CCSCP instances with $10^3$ scenarios, the number of variables is only reduced by $10^3$, which does not allow the proposed \BD algorithm to draw a competitive advantage.}
\rev{However, for instances with at least $3\!\times\!10^3$ scenarios, as the number of variables in the Benders reformulation \eqref{mp1} is much smaller than that of the original problem  \eqref{mip},}
the performance of \tbBnBC is much better than that of \tbCPX.
Indeed, for large-scale instances with $10^6$ scenarios, \tbCPX was even unable to solve the \LP relaxation of problem \eqref{mip} \rev{within the time limit of 7200 seconds while \tbBnBC can solve $603$ instances among the $720$ instances to optimality}. 
Comparing \tbAUTO and \tbBnBC, we observe that 
\rev{for instances with at most $10^4$ scenarios, the performance of the two settings is comparable; \tbBnBC is slightly outperformed by \tbAUTO in terms of the \CPU time, but can solve more instances to optimality within the time limit of $7200$ seconds.}
For large-scale instances with $10^6$ scenarios, \tbBnBC significantly outperforms \tbAUTO; \tbBnBC can solve $25$ more instances with the \CPU time reduced by a factor of $4.6$.
	\rev{When looking at the computational results grouped by the numbers of rows and columns $(m,n)$, we observe that with the increasing of $(m,n)$, solving problem \eqref{mip} becomes more difficult for all three settings \tbCPX, \tbAUTO, and \tbBnBC; compared with the other two settings, our proposed \tbBnBC performs much better on the hard instances with $(m,n) =(400,4000), (500,5000)$.}

\begin{table}[t]
	\centering
	\small
	\renewcommand{\arraystretch}{1.1}
	\addtolength{\tabcolsep}{1pt}
	\caption{\small Performance comparison of \tbCPX, \tbAUTO, and \tbBnBC on problem \eqref{mip}. ``--'' means that \tbCPX was unable to find a feasible solution \rev{(for instances with $10^6$ scenarios)} within the time limit.}\label{ijoc-cpxbd}
	\begin{tabular}{llccccccccccccc}
		\toprule
		&    & \texttt{All}  &  \multicolumn{3}{c}{\tbCPX}   & \multicolumn{3}{c}{\tbAUTO}  & \multicolumn{3}{c}{\tbBnBC}    \\
		\cmidrule(r){4-6} \cmidrule(r){7-9}   \cmidrule(r){10-12}    
		&    &                &  \tbS & \tbT & \tbG          &\tbS & \tbT & \tbG           &\tbS & \tbT & \tbG  \\
		\hline
				$s$                   & $10^3$    &  720   &  \textbf{628} &   \textbf{61.3} &     \textbf{15.4}       &  585 & 85.2 &      21.4     & 609 &   103.0 & 18.6   \\
	& $3\!\times\!10^3$  &  720   &  593 &  177.8 &     21.8       &  585 & \textbf{91.8} &      18.9     & \textbf{606} &   101.4 & \textbf{16.8}   \\
	& $5\!\times\!10^3$  &  720   &  570 &  348.8 &     23.3       &  586 & \textbf{94.2} &      16.5     & \textbf{610} &   100.3 & \textbf{14.9}   \\
	& $10^4$    &  720   &  526 &  917.8 &     27.7       &  582 & \textbf{100.9} &      15.7     & \textbf{607} &   104.1 & \textbf{14.4}   \\
	& $10^6$    &  720   &   -- &     -- &       --      &  578 &   691.8 &      16.1      & \textbf{603} & \textbf{149.4} & \textbf{14.4}   \\
	\hline
	$(m,n)$               
	& $(50,500)$  &  300   &  239 &  160.0 &       --       & \textbf{300} & \textbf{19.9} & \textbf{0.0}     & \textbf{300} &    20.3 & \textbf{0.0}   \\
	& $(200,1000)$  &  900   &  720 &  122.4 &       --       & \textbf{900} & \textbf{21.2} & \textbf{0.0}     & \textbf{900} &    24.8 & \textbf{0.0}   \\
	& $(200,2000)$  &  600   &  480 &  127.7 &       --       & \textbf{600} & \textbf{26.2} & \textbf{0.0}     & \textbf{600} &    27.0 & \textbf{0.0}   \\
	& $(300,3000)$  &  600   &  472 &  496.0 &       --       & \textbf{600} &   107.6 & \textbf{0.0}     & \textbf{600} & \textbf{70.8} & \textbf{0.0}   \\
	& $(400,4000)$  &  600   &  378 & 1583.8 &       --       &  423 &   849.8 &       5.6     & \textbf{500} & \textbf{320.7} & \textbf{4.7}   \\
	& $(500,5000)$  &  600   &   28 & 7043.2 &       --       &   93 &  6257.4 &      20.5     & \textbf{135} & \textbf{4859.2} & \textbf{18.5}   \\
		\bottomrule
	\end{tabular}
	\vspace{-0.4cm}
\end{table}
\begin{table}[t]
	\centering
	\small
	\renewcommand{\arraystretch}{1.1}
	\addtolength{\tabcolsep}{0pt}
	\caption{\small Performance comparison of \tbPEPF of \cite{Saxena2010} and \tbBnBC on problem \eqref{minlp}.}\label{ijoc-pepfgbd}
	\begin{tabular}{llcccccccccc}
		\toprule
		& &  \texttt{All}  &  \multicolumn{5}{c}{\tbPEPF}   & \multicolumn{3}{c}{\tbBnBC}         \\
		\cmidrule(r){4-8} \cmidrule(r){9-11}   
		&       &       &   \tbS & \tbT & \tbG &\tbCON  &\tbET   &   \tbS & \tbT & \tbG \\
		\hline
		$s$                   & $10^3$    &  480   &  432 &    82.8 &      18.7 &   8488 &   50.2   & \textbf{467} & \textbf{40.5} & \textbf{12.7}    \\
		& $3\!\times\!10^3$  &  480   &  428 &    79.2 &      19.4 &   8080 &   55.0   & \textbf{466} & \textbf{40.1} & \textbf{12.2}    \\
		& $5\!\times\!10^3$  &  480   &  432 &    78.3 &      19.6 &   7923 &   56.9   & \textbf{465} & \textbf{40.5} & \textbf{11.9}    \\
		& $10^4$    &  480   &  432 &    85.0 &      20.0 &   7991 &   58.6   & \textbf{467} & \textbf{38.0} & \textbf{13.5}    \\
		& $10^6$    &  480   & \textbf{432} & \textbf{84.4} &      19.1 &   8099 &  293.0   &  431 &   107.3 & \textbf{14.8}    \\
		\hline
		$(m,n)$               & $(50,500)$  &  200   &  199 &    29.1 & \textbf{0.0} &   1548 &   23.9   & \textbf{200} & \textbf{16.6} & \textbf{0.0}    \\
		& $(200,1000)$  &  600   &  595 &    26.2 & \textbf{0.0} &   6516 &   66.3   & \textbf{600} & \textbf{18.0} & \textbf{0.0}    \\
		& $(200,2000)$  &  400   &  381 &    29.7 & \textbf{0.0} &   6516 &   68.5   & \textbf{400} & \textbf{17.6} & \textbf{0.0}    \\
		& $(300,3000)$  &  400   &  356 &    63.0 & \textbf{0.0} &   9807 &   86.9   & \textbf{400} & \textbf{26.7} & \textbf{0.0}    \\
		& $(400,4000)$  &  400   &  348 &   120.9 & \textbf{0.0} &  13111 &  106.4   & \textbf{400} & \textbf{41.6} & \textbf{0.0}    \\
		& $(500,5000)$  &  400   &  277 & \textbf{1750.3} &      19.3 &  16447 &  125.0   & \textbf{296} &  1943.7 & \textbf{13.2}    \\
		\hline
		$\epsilon$            & 0.05      & 1200   & \textbf{1190} & \textbf{24.1} & \textbf{10.1} &   2321 &   62.2   & 1163 &    49.4 &      12.3    \\
		& 0.1       & 1200   &  966 &   273.3 &      21.8 &  28354 &   95.6   & \textbf{1133} & \textbf{47.8} & \textbf{13.4}    \\
		\hline
		\texttt{Block size}       & 10        & 1200   & \textbf{1182} & \textbf{34.6} & \textbf{11.9} &   3649 &    2.2   & 1162 &    47.3 &      14.9    \\
		& 20        & 1200   &  974 &   192.1 &      21.2 &  18038 & 1897.7   & \textbf{1134} & \textbf{49.9} & \textbf{12.9}    \\
		\hline
		\texttt{Distribution}       & \texttt{Circular}  & 1200   & 1146 &    50.1 &      16.5 &   4907 &   55.3   & \textbf{1159} & \textbf{47.2} & \textbf{13.0}    \\
		& \texttt{Star}  & 1200   & 1010 &   133.5 &      20.7 &  13416 &  107.4   & \textbf{1137} & \textbf{49.9} & \textbf{13.3}    \\
		\bottomrule
	\end{tabular}
\end{table}

Next, we consider the \CCSCP with a block structure of $\xi$, where the underlying formulation is problem \eqref{minlp}, and  compare the performance of the proposed \BD algorithm with the state-of-the-art \BnC algorithm of \cite{Saxena2010}\rev{, denoted by \tbPEPF (in Online Appendix D, we also present computational results to compare the performance of the proposed \BD algorithm with the direct use of the state-of-the-art \MINLP solver Gurobi).}
The \BnC algorithm of \cite{Saxena2010} solves an \MIP formulation built on the so-called $(1-\epsilon)$-efficient and -inefficient points (which must be enumerated before starting the algorithm).
\cref{ijoc-pepfgbd} summarizes the computational results on instances with block sizes being $10$ and $20$.
We do not report the results on instances with a single block, as for the \BnC algorithm of \cite{Saxena2010},  the block size is too large to enumerate the $(1-\epsilon)$-efficient and -inefficient points within a reasonable period of time.
For setting \tbPEPF, we report the average number of (linear) constraints that are used to model the probabilistic constraint (\tbCON); and for completeness, we also report the average \CPU time spent in the enumeration phase (\tbET), which is not included in the time spent in solving the \MIP formulation. 
\rev{From \cref{ijoc-pepfgbd}, we observe that the performance of \tbPEPF and \tbBnBC is not sensitive to the number of scenarios $s$ when $s \leq 10^4$.
However, solving instances with $s=10^6$ using both \tbPEPF and \tbBnBC is more computationally expensive; for \tbPEPF, the additional computational effort comes from the enumeration procedure of $(1-\epsilon)$-efficient and -inefficient points.
As for the problem size, we observe that, as expected, with the increasing of $(m,n)$, solving problem \eqref{minlp} is more computationally expensive for both \tbPEPF and \tbBnBC; for the largest case $(m,n)=(500,5000)$, the proposed \tbBnBC  cannot solve all instances with $10^3$ scenarios (deduced from the facts that  \tbBnBC can solve \CCSCP instances with $(m,n) \leq (400, 4000)$ to optimality and  for instances with $10^3$ scenarios, it fails to solve $13$ of them to optimality).}
\rev{In addition,} 
when $\epsilon$ and the block size are small or $\xi^t$ is constructed by a circular distribution, the number of constraints in the \MIP formulation of \cite{Saxena2010} is small, and thus \tbPEPF performs better than \rev{or comparably to} the proposed \tbBnBC (provided that $(1-\epsilon)$-efficient and -inefficient points are at hand). 
However, for instances with a large $\epsilon$, a large block size, or the star distribution of $\xi^t$, 
due to the large number of constraints in the \MIP formulation, using \tbPEPF yields worse performance.
In contrast, as the proposed \BD algorithm does not need to solve a large \MIP formulation, it performs significantly  better than \tbPEPF on these instances, as illustrated in \cref{ijoc-pepfgbd}. 
Indeed, the performances of \tbBnBC on instances with different $\epsilon$, block size, or distribution of $\xi^t$ are comparable, showing that the performance of \tbBnBC is not sensitive to the parameters $\epsilon$, block size, or distribution of $\xi^t$.
Moreover, compared with \tbPEPF, the proposed \tbBnBC can avoid calling a potentially time-consuming enumeration procedure of $(1-\epsilon)$-efficient and -inefficient points.
These results show that for \CCSCPs with a block structure of $\xi$, the proposed \BD algorithm is much more robust than the state-of-the-art approach in \cite{Saxena2010} in terms of solving \eqref{biccscp} with different input parameters.

\begin{table}[t]
	\centering
	\small
	\renewcommand{\arraystretch}{1.2}
	\addtolength{\tabcolsep}{2.5pt}
	\caption{\small Performance comparison of problems \eqref{mip} and \eqref{minlp}.}\label{ijoc-bdgbd}
	\begin{tabular}{llcccccccccccc}
		\toprule
		&     &  \texttt{All}  &  \multicolumn{3}{c}{\texttt{Problem} \eqref{mip}}   & \multicolumn{3}{c}{\texttt{Problem} \eqref{minlp}} & \multicolumn{2}{c}{\texttt{Difference}} \\
		\cmidrule(r){4-6} \cmidrule(r){7-9} \cmidrule(r){10-11} 
		&      &       &   \tbS & \tbT & \tbG  &   \tbS & \tbT & \tbG &\tbNDO & \tbDO       \\
		\hline
		$s$             & $10^3$   &  720        &  609 &   103.0 &      16.9         & \textbf{660} & \textbf{57.8} & \textbf{16.0}      &   51 &    1.0    \\
		& $3\!\times\!10^3$ &  720        &  606 &   101.4 & \textbf{16.3}         & \textbf{658} & \textbf{57.6} &      17.1      &   32 &    0.9    \\
		& $5\!\times\!10^3$ &  720        &  610 &   100.3 &      16.3         & \textbf{656} & \textbf{57.9} & \textbf{16.1}      &   26 &    0.9    \\
		& $10^4$   &  720        &  607 &   104.1 &      16.7         & \textbf{660} & \textbf{54.7} & \textbf{16.6}      &   15 &    1.0    \\
		& $10^6$   &  720        &  603 &   149.4 &      18.0         & \textbf{622} & \textbf{135.6} & \textbf{15.5}      &    3 &    1.0    \\
		\hline
		\texttt{Block size} & 10       & 1200        & 1023 &   107.7 &      22.0         & \textbf{1162} & \textbf{47.3} & \textbf{13.8}      &   67 &    0.8    \\
		& 20       & 1200        & 1015 &   113.4 &      22.5         & \textbf{1134} & \textbf{49.9} & \textbf{13.1}      &   60 &    1.0    \\
		& $m$      & 1200        & \textbf{997} & \textbf{109.7} & \textbf{14.7}         &  960 &   131.8 &      17.8      &    0 &    0.0    \\
		\bottomrule
	\end{tabular}
\end{table}

\subsection{Comparison of solving problems \eqref{mip} and \eqref{minlp}}\label{sec:comparetwoproblem}

Finally, we compare the performance of problems \eqref{mip} and \eqref{minlp} when using them as \rev{sample approximation problems} to solve \CCSCPs.
We use the proposed \BD algorithms to solve the two problems.
\cref{ijoc-bdgbd} summarizes the computational results.
To see the difference between the two problems, we additionally report the number of instances with different optimal values (\tbNDO) and the gap between the optimal values of these instances 
(defined by $\tbDO = 100 \times \frac{|o_1 - o_2|}{\max\{o_1, o_2\}}$, where $o_1$ and $o_2$ represent the optimal values of the two problems).

As shown in \cref{ijoc-bdgbd}, for most cases, the optimal values of problems \eqref{mip} and \eqref{minlp} are identical, and even if the optimal values are different in some cases, their difference is usually very small. 
This shows that the gap between the two problems is indeed very small, especially for the cases with a huge scenario size.
As for the computational efficiency, we observe that, as expected, for instances with a single block, solving problem \eqref{minlp} is outperformed by solving problem \eqref{mip}.
In contrast,
for instances with multiple blocks (or block sizes of $10$ and $20$), problem \eqref{minlp} is much more solvable by the proposed \BD algorithm.
This is reasonable as by exploiting the block structure of the random variable $\xi$, up to $T$ cuts can be constructed in a single iteration, making the proposed \BD algorithm for problem \eqref{minlp} converge much faster.

\section{\rev{Conclusions and future work}}\label{section:conclusion}

In this paper, we have considered the \CCSCPs with an arbitrary finite discrete distribution of $\xi$ and with a block structure of $\xi$.
We developed efficient \BD algorithms for solving the \MIP and \MINLP formulations of the \CCSCPs where the scenario variables are projected out from the formulation and the Benders feasibility cuts are separated on the fly. 
Three key features of the proposed \BD algorithms, which make them particularly suitable for solving large-scale \CCSCPs, are:
(i) the numbers of variables in the underlying Benders reformulations are independent of the scenario sizes of $\xi$; 
(ii) the Benders feasibility cuts can be separated by an efficient polynomial-time algorithm;
and (iii) they are equipped with a class of strong valid inequalities---lifted cover inequalities, which renders them able to converge more quickly.
By extensive computational experiments, we demonstrated the effectiveness of our proposed \BD algorithms for solving large-scale \CCSCPs.
In particular, for \CCSCPs with an arbitrary finite discrete distribution of $\xi$, the proposed \BD algorithm is much more efficient than the \BnC and automatic \BD algorithms of \cplex~\rev{(when the number of scenarios is large)};
for \CCSCPs with a block structure of $\xi$, the proposed \BD algorithm is much more robust than the state-of-the-art approach in \cite{Saxena2010} in terms of solving problems with different input parameters.

\rev{As observed in \cref{sec:inequalities}, 
	the proposed approach for computing the general lifted cover inequalities in \cref{lciigeneral,lcispecial} is still computationally demanding, making it generally  not beneficial to the proposed \BD algorithms for solving \CCSCP{s}.
	As for future work, we shall develop acceleration strategies for efficiently solving the lifting problems.
	One interesting direction is to analyze the relations among the lifting problems and design warm-start algorithms based on the solutions of previously solved lifting problems, as to speed up the
	computation of the lifted cover inequalities.
	In addition, it would be interesting to extend the proposed \BD approach to other variants of the \CCSCP, 
	such as the robust version \citep{Lutter2017,Shen2023} and the conditional value-at-risk based risk-averse version \citep{Deng2018,Wu2023} of the \SCP.}

\vspace{0.7cm}
{\noindent\bf \rev{Acknowledgments}}\\[4pt]
\rev{We would like to thank the two anonymous reviewers for their insightful comments that significantly improved the quality of our work.}

\bibliography{shorttitles,pscp}
\bibliographystyle{apalike}

\end{document}